\title[Unfitted FEM with low regularity estimates]{An unfitted finite
element method for elliptic interface problem with low regularity
estimates}
\author[F.-Y. Yang]{Fanyi Yang} \address{School of Mathematics
, Sichuan University, Chengdu 610065, P.R. China}
\email{yangfanyi@scu.edu.cn}
\newcommand{\bm}[1]{\boldsymbol{#1}}
\newcommand{\bmr}[1]{\bm{\mr{#1}}}
\newcommand{\lj}{[ \hspace{-2pt} [}
\newcommand{\rj}{] \hspace{-2pt} ]}
\newcommand{\mb}[1]{\mathbb{#1}}
\newcommand{\mc}[1]{\mathcal{#1}}
\newcommand{\mr}[1]{\mathrm{#1}}
\newcommand{\jump}[1]{\lj #1 \rj}
\newcommand{\aver}[1]{ \{#1\}  }
\newcommand{\dbx}[1]{\mathrm d \boldsymbol{#1}}
\newcommand{\dx}[1]{\mathrm d {#1}}
\newcommand{\Bw}[1]{B_{\omega(#1)}}
\newcommand{\Bt}[1]{B_{\tau(#1)}}
\def\mS{\mc{S}}
\def\mN{\mc{N}}
\def\bu{\bm{\nu}}
\def\bc{\bmr{c}}
\def\MTh{\mc{T}_h}
\def\MThi{\mc{T}_{h, i}}
\def\MThic{\mc{T}_{h, i}^{\circ}}
\def\MThoc{\mc{T}_{h, 0}^{\circ}}
\def\MThlc{\mc{T}_{h, 1}^{\circ}}
\def\MThi{\mc{T}_{h, i}}
\def\MThl{\mc{T}_{h, 1}}
\def\MTho{\mc{T}_{h, 0}}
\def\MFhi{\mc{F}_{h, i}}
\def\MFhic{\mc{F}_{h, i}^{\circ}}
\def\MFhi{\mc{F}_{h, i}}
\def\MFhl{\mc{F}_{h, 1}}
\def\MFho{\mc{F}_{h, 0}}
\def\mE{\mc{E}}
\def\MThG{\mc{T}_h^{\Gamma}}
\def\Ohi{\Omega_{h, i}}
\def\Ohic{\Omega_{h, i}^{\circ}}
\def\Oho{\Omega_{h, 0}}
\def\Ohl{\Omega_{h, 1}}
\def\OhG{\Omega_h^{\Gamma}}
\newcommand{\DGenorm}[1]{ \| #1\|_{e}}
\newcommand{\shnorm}[1]{| #1 |_{s_h}}
\newcommand{\ghnorm}[1]{| #1 |_{g_h}}
\newcommand{\Vnorm}[1]{|#1|_{V}}
\newcommand{\Vhnorm}[1]{|#1|_{V_h}}
\newcommand{\ghonorm}[1]{| #1 |_{g_{h, 0}}}
\newcommand{\ghlnorm}[1]{| #1 |_{g_{h, 1}}}
\newcommand{\ghinorm}[1]{| #1 |_{g_{h, i}}}
\newcommand{\tRomannum}[1]{\text{(\uppercase\expandafter{\romannumeral #1})}}
\newcommand{\sgDnorm}[1]{|#1|_{g_D, \Gamma}}
\newcommand{\tu}[1]{\textbf{\textup{#1}}}
\def\un{\bmr{n}}
\newtheorem{assumption}{Assumption}
\newtheorem{theorem}{Theorem}
\newtheorem{lemma}{Lemma}
\newtheorem{remark}{Remark}
\begin{document}

\maketitle

\begin{abstract}
  In this paper, we present and analyze an unfitted finite element
  method for the elliptic interface problem. 
  We consider the case that the interface is $C^2$-smooth or
  polygonal, and the exact solution $u \in H^{1+s}(\Omega_0 \cup
  \Omega_1)$ for any $s > 0$.
  The stability near the interface is guaranteed by a local
  polynomial extension technique combined with ghost penalty
  bilinear forms, from which the robust condition number estimates and
  the error estimates are derived.
  Furthermore, the jump penalty term for weakly enforcing the jump
  condition in our method is also defined
  based on the local polynomial extension, which enables us to
  establish the error estimation particularly for solutions with low
  regularity.
  We perform a series of numerical tests in two and three dimensions
  to illustrate the accuracy of the proposed method.

\noindent \textbf{keywords}: 
unfitted mesh; 
low regularity estimates; 
polynomial extension;

\end{abstract}


\section{Introduction}
\label{sec_introduction}
The interface problems are commonly encountered in scientific and
engineering applications, where the governing partial differential
equations are typically coupled through interfaces.
The finite element method has long served as a standard computational
tool for solving such problems, and 
over the past decades, various finite element methods have been
developed for solving the elliptic interface problem. 
The finite element method can be roughly classified into fitted and
unfitted methods based on the mesh type. 
The body-fitted mesh is required to be aligned with the interface.
For complex geometries, it is a very time-consuming task to generate 
a fitted and high quality mesh \cite{Huang2017unfitted,
Liu2020interface}. 
In recent years, the unfitted finite element method has emerged as a
very appealing approach, as the interface description is decoupled
from the mesh generation, which provides a great flexibility for
handling complex geometries.
Examples of unfitted finite element methods are the cut finite element
method \cite{Hansbo2002unfittedFEM, Burman2015cutfem,
Massing2019stabilized, Bordas2017geometrically, Burman2021unfitted,
Liu2020interface, Burman2025cut, Li2020interface, Huang2017unfitted,
Wu2012unfitted} and 
the immersed finite element method \cite{Li1998immersed,
Li2006immersed, Lin2019nonconforming, Guo2019higher}.

In \cite{Hansbo2002unfittedFEM}, the authors proposed an unfitted
finite element
method for solving the elliptic interface problem, wherein the
numerical solution is constructed from two distinct finite element
spaces, and the jump conditions are weakly imposed by Nitsche's
penalty method.
This idea has been a popular discretization for interface problems,
now known as the cut finite element method (CutFEM), and has been
applied to many interface problems \cite{Hansbo2014cut,
Guzman2018infsup, Burman2012ficticious, Burman2014fictitious,
Liu2020interface, Li2023curl, Yang2024least}. 
We refer to the survey papers \cite{Burman2015cutfem, Burman2025cut,
Gurkan2019stabilized} and the references therein for further advances.
For such penalty methods, the small cuts near the interface have to be
treated very carefully, which may adversely affect the conditioning of
the method and even hamper the convergence \cite{Burman2021unfitted}.  
Some control from the physical domain to the entire active mesh is
generally necessary \cite{Burman2025cut}.
A widely adopted approach is to add suitable stabilization terms to
the weak form, 
such as the ghost penalty form \cite{Burman2010ghost,
Gurkan2019stabilized}, which serves to extend the control of the
relevant norms across the physical domain and the active mesh.
In addition to this method, the strategies based on the agglomeration of
elements and the extension techniques 
have also been employed for the stabilization, see
\cite{Johansson2013high, Badia2018aggregated, Neiva2021robust,
Huang2017unfitted, Chen2021an, Burman2021unfitted, Burman2022cutfem,
Yang2022an} for some examples. 
The main idea is to extend the numerical solution from the interior 
stable elements to cut elements for stabilization.

To our best knowledge, almost all of the existing works establish the
error estimation under the assumption that the exact solution has at
least piecewise $H^2$ regularity and the interface is $C^2$.
For the case where the interface is only polygonal, the exact solution
may be of low regularity \cite{Costabel1999singularities}, and there
are few studies concerning such cases.
We note that the $H^1$ trace estimate is typically the main tool to
estimate the approximation error on the interface.
Applying this trace estimate to the gradient of the exact solution
requires the solution is at least $H^2$. 
For the case of low regularity, this tool is no longer applicable.
In a recent work \cite{Burman2024low}, the authors proposed a CutFEM
approximation for the elliptic problem with mixed boundary conditions
under minimal regularity assumptions, i.e. the exact solution $u \in
H^s$ with $1 \leq s < 3/2$.
They introduced a regularized finite element formulation and designed
a cutoff function to address the difficulty that the solution is not
regular enough.

In this paper, we present an unfitted finite element method for the
elliptic interface problem under the assumption that the solution is
piecewise $H^{1+s}$ with $s > 0$, following the CutFEM framework.
In our method, we adopt the local polynomial extension technique to
give a suitable penalty bilinear to cure the effects brought by small
cuts. 
More importantly, the jump penalty term, which is used to weakly
impose the inhomogeneous jump conditions, is also constructed from the
local polynomial extension. 
By this jump term, we establish an inverse-type estimate on the
interface and further construct a linear mapping from the extended
finite element space to the $H^1$ space. 
We then follow the framework outlined in \cite{Gudi2010new} to prove
the optimal error estimates under the $H^1$ norm for low regularity
solutions based on that constructed linear mapping.
These theoretical results are confirmed by a series of numerical
experiments.

The rest of this paper is organized as follows. 
In Section \ref{sec_preliminaries}, we introduce the elliptic
interface problem and give the notation used in the numerical method. 
The properties of the direct polynomial extension operator are also
included.
In Section \ref{sec_scheme}, we develop the numerical scheme and
present the error estimation.
Finally, Section \ref{sec_numericalresults} reports 
a series of numerical tests in both two and three dimensions
to illustrate the numerical performance for the proposed method.
In Appendix \ref{sec_app}, we present a property of the barycenter of
a given simplex.


\section{Problem setting and preliminaries}
\label{sec_preliminaries}
Let $\Omega \subset \mb{R}^d(d = 2, 3)$ be an open polygonal
(or polyhedral) domain.
Let $\Gamma \Subset \Omega$ be a $C^2$-smooth or polygonal
(polyhedral) interface, which separates $\Omega$ into two disjoint
subdomains $\Omega_0$ and $\Omega_1$. 
Here, $\Omega_0 \cap \Omega_1 = \varnothing$ and $\overline{\Omega}_0
\cup \overline{\Omega}_1 = \overline{\Omega}$.
In this paper, we study the elliptic interface problem proposed on
$\Omega_0 \cup \Omega_1$, which reads: find $u \in H^1(\Omega_0 \cup
\Omega_1)$ such that 
\begin{equation}
  \begin{aligned}
    -\nabla \cdot (\alpha \nabla u) &= f, && \text{in } \Omega_0 \cup
    \Omega_1, \\
    u &= 0, && \text{on } \partial \Omega, \\
    \jump{u} = g_D \un_{\Gamma}, \quad \jump{\alpha \nabla_{\un} u} &
    = g_N, &&
    \text{on } \Gamma, \\
  \end{aligned}
  \label{eq_problem}
\end{equation}
where $\alpha$ is a positive piecewise constant function on $\Omega_0
\cup \Omega_1$ and $\jump{\cdot}$ denotes the jump of a function on
the interface (see \eqref{eq_jump}). 
The weak formulation to the problem \eqref{eq_problem} is to find $u
\in H^1(\Omega_0 \cup \Omega_1)$ with $\jump{u}|_{\Gamma} = g_D$ and
$u|_{\partial \Omega} = 0$ such that
\begin{displaymath}
  a(u, v) = l(v), \quad \forall v \in H_0^1(\Omega), 
\end{displaymath}
where 
\begin{equation}
  a(w, v) := \int_{\Omega_0 \cup \Omega_1} \alpha \nabla w \cdot
  \nabla v \dbx{x}, \quad l(v) := \int_{\Omega_0 \cup \Omega_1} f v
  \dbx{x} + \int_{\Gamma} g_N v \dbx{s}.
  \label{eq_weakform}
\end{equation}
In our study, we assume that for a fixed $s > 0$, the data functions
$f \in H^{s-1}(\Omega_0 \cup \Omega_1), g_D \in H^{s-1/2}(\Gamma), g_N
\in H^{s-3/2}(\Gamma)$ and the interface problem \eqref{eq_problem}
admits a unique solution $u \in H^{1+s}(\Omega_0 \cup \Omega_1)$.
We refer to \cite{Kellogg1975poisson, Kellogg1972higher} for more
details on regularity results to \eqref{eq_problem}.
We assume that the exact solution $u^i := u|_{\Omega_i}$ can be
extended to the entire domain such that $u^i \in H^{1+s}(\Omega)$ and
$u|_{\Omega_i} = u^i|_{\Omega_i}$ with $\| u^i \|_{H^{1+s}(\Omega)}
\leq C \| u \|_{H^{1+s}(\Omega_i)}$ for both $i = 0, 1$.

Let us introduce the notation required in the discrete numerical
scheme.
We denote by $\MTh$ a quasi-uniform partition of $\Omega$ into
triangular (tetrahedral) elements. 
The mesh is unfitted, meaning that the element faces in the mesh
are not required to align with the interface $\Gamma$. 
Let $h_K$ denote the diameter of $K \in \MTh$, and let $\rho_K$ denote
the radius of the largest ball inscribed in $K$.
The mesh size $h$ is given as 
$h := \max_{K \in \MTh} h_K$. 
Let $\rho := \min_{K \in \MTh} \rho_K$, and 
the mesh is quasi-uniform in the sense that there exists a constant
$C_{\nu}$ independent of $h$ such that $h \leq C_{\nu} \rho$.

We next give the notation related to subdomains and the interface. 
For $i = 0, 1$, we define 
\begin{align*}
  \MThi := \{K \in \MTh: \ K \cap \Omega_i \neq \varnothing\}, \quad
  \MThic  := \{K \in \MThi: \ K \subset \Omega_i\},
\end{align*}
Here, $\MThi \subset \MTh$ is the minimal subset of elements that
fully covers $\Omega_i$, i.e., the active mesh of $\Omega_i$, 
and $\MThic$ consists of all elements entirely contained in
$\Omega_i$. 
We define $\MThG :=  \{K \in \MTh: \ K \cap \Gamma \neq \varnothing\}$
as the set of all cut elements. 
The associated domains are defined as
\begin{displaymath}
  \Ohi := \text{Int}(\bigcup_{K \in \MThi} \overline{K}), \quad \Ohic
  := \text{Int}(\bigcup_{K \in \MThic} \overline{K}), \quad \OhG :=
  \text{Int}(\bigcup_{K \in \MThG} \overline{K}),
\end{displaymath}
and it is clear that $\Ohic \subset \Omega_i \subset \Ohi$.
For any cut $K \in \MThG$, we let $\Gamma_K := K \cap \Gamma$. 
For the mesh $\MThi$, we let $\MFhic$ be the set of all interior $d-1$
dimensional faces in $\Ohi$. 
For any $f \in \MFhic$, we let $h_f$ denote its diameter and we let
$f^i := f \cap \Omega_i$.
For domains $\Oho$ and $\Ohl$, the following $C^0$ finite element
spaces 
\begin{align*}
  V_{h, 0}^m & := \{v_h \in H^1(\Oho): v_h|_K \in \mb{P}_m(K), \quad
  \forall K \in \MTho \}, \\
  V_{h, 1}^m & := \{v_h \in H^1(\Ohl): v_h|_{\partial \Omega} = 0,
  \quad v_h|_K \in \mb{P}_m(K), \quad \forall K \in \MThl\}, 
\end{align*}
will be employed in the scheme to approximate the exact solutions
$u^0, u^1$ over $\Omega_0$ and $\Omega_1$, respectively.
We set $V_h^m := V_{h, 0}^m \cdot \chi_0 + V_{h, 1}^m \cdot \chi_1$,
where $\chi_i$ denotes the characteristic function of $\Omega_i$. 
It is evident that any $v_h \in V_h^m$ admits a unique decomposition
$v_h  = v_{h, 0} \cdot \chi_0 + v_{h, 1} \cdot \chi_1$ with $v_{h, i}
\in V_{h, i}^m (i=0, 1)$. 
From this decomposition, we
formally introduce a projection operator $(\cdot)^{\pi_i}: V_h^m
\rightarrow V_{h, i}^m$ such that $v_h^{\pi_i} := v_{h, i} \in  V_{h,
i}^m$ for any $v_h \in V_h^m$.
We define a seminorm $\Vhnorm{\cdot}$ on $V_h$ as
\begin{equation}
  \Vhnorm{v_h}^2 :=  \| \nabla v_h^{\pi_0} \|_{L^2(\Oho)}^2 + \|
  \nabla v_h^{\pi_1} \|_{L^2(\Ohl)}^2, \quad \forall v_h \in V_h^m,
  \label{eq_Vhnorm}
\end{equation}
which is clearly stronger than the seminorm $\|  \nabla v_h
\|_{L^2(\Omega_0 \cup \Omega_1)}$.
Moreover, we define a subspace $V_{h, \bc}^m := \{ v_h \in
V_h^m: \ v_h^{\pi_0}|_{\OhG} = v_h^{\pi_1}|_{\OhG}\}$. 
This subspace satisfies that $V_{h, \bc}^m \subset H_0^1(\Omega)$ and
the norm equivalence 
$\|\nabla v_h\|_{L^2(\Omega)} \leq \Vhnorm{v_h} \leq 2 \| \nabla v_h
\|_{L^2(\Omega)}$ holds for any $v_h \in V_{h, \bc}^m$.

Let $v$ and $\bm{q}$ be scalar- and vector-valued piecewise smooth
functions over $\Omega_0 \cup \Omega_1$.  The average
operator $\aver{\cdot}$ and the jump operator
$\jump{\cdot}$ on the interface $\Gamma$ are defined as 
\begin{displaymath}
  \aver{v}|_{\Gamma} := \frac{1}{2}(v^0|_{\Gamma} + v^1|_{\Gamma}),
  \quad \aver{\bm{q}}|_{\Gamma} := \frac{1}{2} (\bm{q}^0|_{\Gamma} +
  \bm{q}^1|_{\Gamma})
\end{displaymath}
with $v^0 := v|_{\Omega_0}, v^1 := v|_{\Omega_1}, 
\bm{q}^0 := \bm{q}|_{\Omega_0}, \bm{q}^1 := \bm{q}|_{\Omega_1}$, and
\begin{equation}
  \jump{v}|_{\Gamma} := (v^0|_{\Gamma} - v^1|_{\Gamma})\un_{\Gamma},
  \quad
  \jump{\bm{q}}|_{\Gamma} := (\bm{q}^0|_{\Gamma} - \bm{q}^1|_{\Gamma})
  \cdot \un_{\Gamma},
  \label{eq_jump}
\end{equation}
with $\un_{\Gamma}$ denoting the unit normal vector on $\Gamma$
pointing from $\Omega_0$ to $\Omega_1$. 

For a bounded domain $D$, we denote by $L^2(D)$ and $H^r(D)$ 
the standard Sobolev spaces, and their corresponding inner
products, seminorms and norms are also followed.  
From the embedding theory \cite{Girault1986finite}, we know that
$H^1(\Omega) \hookrightarrow H^{1/2}(\Gamma)$ equipped with the norm
$\|v \|_{H^{1/2}(\Gamma)} = \inf\limits_{w \in H^1(\Omega),
w|_{\Gamma} = v} \| w \|_{H^1(\Omega)}$. 
Let $H^{-1/2}(\Gamma)$ be the dual space of $H^{1/2}(\Gamma)$ with the
norm $\|v \|_{H^{-1/2}(\Gamma)} = \sup\limits_{0 \neq w \in
H^{1/2}(\Gamma)} \frac{(v, w)_{L^2(\Gamma)}}{\|w
\|_{H^{1/2}(\Gamma)}}$.
Hereafter, $C$ and $C$ with subscripts are denoted to be
generic positive constants that may vary in the context, but are
always independent of the mesh size $h$, and how the interface
$\Gamma$ cuts the mesh $\MTh$.

For any $K \in \MTh$, we define $\omega(K) := \{K' \in \MTh:
\overline{K'} \cap \overline{K} \neq \varnothing \}$ as the set of
elements touching $K$. 
We make the following assumptions to indicate the interface is
well-resolved by the background mesh \cite{Guzman2018infsup}:
\begin{assumption}
  The intersection $\overline{\OhG} \cap \partial \Omega$ is empty. 
  \label{as_empty}
\end{assumption}
\begin{assumption}
  For any $K \in \MThG$, we assume that both sets $W_K^0 := \omega(K)
  \cap \MThoc$ and $W_K^1 := \omega(K) \cap \MThlc$ are not empty. 
  \label{as_resolution}
\end{assumption}
The above assumptions can be fulfilled provided that the mesh $\MTh$ is
sufficiently fine.
Assumption \ref{as_resolution} allows us to introduce two mappings
$(\cdot)^{\varrho_i}: \MThG \rightarrow \MThic(i = 0, 1)$ for cut
elements. 
For any $K \in \MThG$, $K^{\varrho_i}$ can, in principle, be chosen as
anyone in $W^i_K$. In practice, one can take $K^{\varrho_i}$ as the
element in $W_K^i$ sharing a common face with $K$ if
possible.
Let $\bm{x}_K$ denote the barycenter of $K \in \MTh$. 
We denote by 
$B(\bm{z}, \xi)$ a disk (ball) centered at $\bm{z}$ with radius
$\xi$ and by $\partial B(\bm{z}, \xi)$ its boundary. 
Because $\MTh$ is quasi-uniform, there exists a constant $C_{\omega}$
independent of $h$ such that $\bigcup_{K' \in \omega(K)} \overline{K'}
\subset B(\bm{x}_K, C_{\omega} h_K)$ for any $K \in \MTh$.
We further define two balls $\Bw{K} := B(\bm{x}_K, C_{\omega} h_K)$
and $\Bt{K} := B(\bm{x}_K, h_K)$ for any $K \in \MTh$, which will be
used in the theoretical analysis. 
Since $K$ is fully contained in both balls, $\Bw{K}$ and $\Bt{K}$ 
will also be intersected by the interface if $K \in \MThG$ is a cut
element.

\begin{remark}
  Assumption \ref{as_resolution} can be further weakened by letting
  $W_K^i$ be formed by neighbours of $K$ with degree $L \geq 2$. 
  For example, $W_K^i$ can be set as $\omega(\omega(K)) \cap \MThic$,
  where $\omega(\omega(K)) := \bigcup\limits_{K' \in
  \omega(K)}\omega(K')$.
  \label{re_meshassump}
\end{remark}

We next introduce a local polynomial extension operator, 
which plays two roles in our analysis of the scheme. 
The first is for curing the effect caused by small cuts near the
interface. 
It is noted that the idea of the local extension has been a standard
stabilization strategy in unfitted finite element methods, see
\cite{Badia2018aggregated, Burman2021unfitted, Chen2021an,
Burman2022cutfem,
Huang2017unfitted, Johansson2013high, Yang2022an, Yang2024least} for
examples.
In our method, we integrate the polynomial extension with the ghost
penalty method \cite{Burman2010ghost} to construct suitable penalty
terms for stabilization, see Remark \ref{re_ghost}.
More importantly, the polynomial extension enables us to establish an
inverse-type estimate on the interface, which is crucial in deriving
the error estimates in the case of low regularity.

\begin{remark}
  The main idea in the ghost penalty method is to extend the control
  of the relevant norms from the interior domain to the entire domain
  of the active mesh by suitable bilinear forms \cite{Burman2010ghost,
  Massing2019stabilized}. 
  In our scheme, for $i = 0, 1$, we assume that the ghost penalty
  bilinear form $g_{h, i}(\cdot, \cdot)$ over $V_{h, i}^m \times V_{h,
  i}^m$ with the induced seminorm $\ghinorm{\cdot}^2 := g_{h,
  i}(\cdot, \cdot)$ satisfying the following properties exists. 
  \begin{enumerate}
    \item[\tu{P1}]: $H^1$-seminorm extension property: 
      \begin{equation}
        \| \nabla v_h \|_{L^2(\Ohi)} \leq C (\| \nabla v_h
        \|_{L^2(\Omega)} + \ghinorm{v_h}) \leq C \| \nabla v_h
        \|_{L^2(\Ohi)}, \quad \forall v_h \in
        V_{h, i}^m.
        \label{eq_ghP1}
      \end{equation}
    \item[\tu{P2}]: weak consistency: 
      \begin{equation}
        \ghinorm{\Pi_{V_{h, i}^m} v} \leq C h^t \| v
        \|_{H^{s+1}(\Omega)}, \quad \forall v \in H^{s+1}(\Omega),
        \quad t = \min(s, m),
        \label{eq_ghP2}
      \end{equation}
      where $\Pi_{V_{h, i}^m}: H^{s+1}(\Omega) \rightarrow V_{h, i}^m$
      is the Scott-Zhang interpolation operator
      \cite{Scott1990finite}.
  \end{enumerate}
  The properties $\tu{P1} - \tu{P2}$ are similar to Assumptions
  \tu{EP1} - \tu{EP2} given in \cite[\tu{EP1} -
  \tu{EP4}]{Massing2019stabilized}. 
  In \cite{Massing2019stabilized}, there are two extra assumptions
  (\tu{EP3} - \tu{EP4}) to ensure the condition number of the
  resulting linear system grows as in the standard finite element
  method, while in our method both assumptions are unnecessary.
  We also note that 
  the forms $g_{h, i}(\cdot, \cdot)$ that meet \tu{P1} - \tu{P2} can
  be constructed by the face-based penalties or the projection-based
  penalties, see \cite{Massing2019stabilized, Burman2015cutfem}. 
  \label{re_ghost}
\end{remark}

For any $K \in \MTh$, let $\Pi_K^m: L^2(K) \rightarrow \mb{P}_m(K)$
be the $L^2$ projection operator, and let $\mE_K^m:
\mb{P}_m(K) \rightarrow \mb{P}_m(\mb{R}^d)$ be the canonical extension
of a polynomial to $\mb{R}^d$.
For the analysis to the numerical scheme, we introduce two local
extension operators $E_{K, \omega}^m: L^2(K) \rightarrow
\mb{P}_m(\Bw{K})$ and $E_{K, \tau}^m: L^2(K) \rightarrow
\mb{P}_m(\Bt{K})$, which read 
\begin{equation}
  E_{K, \omega}^m v := (\mE_{K}^m (\Pi_K^m v))|_{B_{\omega(K)}},
  \quad E_{K, \tau}^m v := (\mE_{K}^m (\Pi_K^m v))|_{B_{\tau(K)}},
  \quad \forall K \in \MTh.
  \label{eq_EK}
\end{equation}
The difference between the two operators lies in the regions over
which the extension is performed. 
The first is used to construct the suitable ghost penalty terms, while
the second is employed to establish an inverse estimate on the
interface.
It is noted that both extension operators are mainly used for
theoretical analysis. 
For any $v \in \mb{P}_m(K)$, $E_{K, \omega}^m v = (\mE_K^m
v)|_{\Bw{K}}$ and $E_{K, \tau}^m v = (\mE_K^m v)|_{\Bt{K}}$ are just
direct extensions of $v$ to $\Bw{K}$ and $\Bt{K}$, respectively.
Hence, in the computer implementation both extension operators will
degenerate as canonical extensions. 
For any piecewise polynomial function $v_h \in V_{h, i}^m$, we have
that $E_{K, \omega}^m v_h = E_{K, \omega}^m (v_h|_K)$ and $E_{K,
\tau}^m v_h = E_{K, \tau}^m (v_h|_K)$.

We now show both extension operators are bounded in the sense of
satisfying the following estimates.
\begin{equation}
  \left.
  \begin{aligned}
    \| E_{K, \tau}^m v \|_{L^2(\Bt{K})} \\
    \|E_{K, \omega}^m v \|_{L^2(\Bw{K})}
  \end{aligned}
  \right\} 
  \leq C \| \Pi_K^m v \|_{L^2(K)}
  \leq C \| v \|_{L^2(K)}, \quad \forall v \in L^2(K), \quad \forall K
  \in \MTh. 
  \label{eq_EKL2}
\end{equation}
Because $K$ is shape-regular, there exists $B(\bm{x}_K,
\hat{\rho}_K) \subset K$ with radius $\hat{\rho}_K \geq \hat{C} h_K$.
The norm equivalence on the finite dimensional space gives us that $\|
v \|_{L^2(B(\bm{0}, C_{\omega} \hat{C}))} \leq C \| v
\|_{L^2(B(\bm{0},1))}$ for any $v \in \mb{P}_m(B(\bm{0},
C_{\omega} \hat{C}))$. 
Combining with the affine mapping from $B(\bm{0}, 1)$ to $B(\bm{x}_K,
\hat{\rho}_K)$ and the property of $\Pi_K^m$, we find that
\begin{align*}
  \| E_{K, \omega}^m v \|_{L^2(\Bw{K})} \leq C \| E_{K, \omega}^m v
  \|_{L^2(B(\bm{x}_K, \hat{\rho}_K))} \leq C \| \Pi_K^m v\|_{L^2(K)}
  \leq C \| v \|_{L^2(K)}, \quad \forall v \in L^2(K),
\end{align*}
which yields the first estimate in \eqref{eq_EKL2}, and the bound of
$\| E_{K, \tau}^m v\|$ can be derived analogously.
Moreover, a similar argument leads to the estimate
\begin{equation}
  \left.
  \begin{aligned}
    \|\nabla  E_{K, \tau}^m v \|_{L^2(\Bt{K})} \\
    \|\nabla  E_{K, \omega}^m v \|_{L^2(\Bw{K})}
  \end{aligned} \right\} \leq C \| \nabla v \|_{L^2(K)}, \quad \forall
  v \in \mb{P}_m(K).
  \label{eq_EKH1}
\end{equation}
Based on $E_{K, \omega}^m$, 
we outline a method to construct proper penalty forms $g_{h,
i}(\cdot, \cdot)$ that satisfy \tu{P1} - \tu{P2} in Remark
\ref{re_ghost}. 
For $i = 0, 1$, we define
\begin{equation}
  g_{h, i}(v_h, w_h) := \sum_{K \in \MThG} \int_K \nabla (v_h -
  E_{K^{\varrho_i}, \omega}^m v_h) \cdot \nabla (w_h - E_{
  K^{\varrho_i}, \omega}^m w_h) \dbx{x}, \quad \forall v_h, w_h \in
  V_{h, i}^m,
  \label{eq_ghi}
\end{equation}
with the seminorm $\ghinorm{v_h}^2 := g_{h, i}(v_h, v_h)$.
Since $K \in B_{\omega(K^{\varrho_i})}$ for any $K \in \MThG$, the
polynomial $E_{K^{\varrho_i}, \omega}^m v_h$ is well-defined on $K$.
From \eqref{eq_EKH1} and the triangle inequality, we obtain that
\begin{align*}
  \ghinorm{v_h}^2 & \leq C \sum_{K \in \MThG} ( \| \nabla v_h
  \|_{L^2(K)}^2 + \| \nabla E_{K^{\varrho_i}, \omega}^m v_h
  \|_{L^2(K)}^2) \\
  & \leq C  \sum_{K \in \MThG} ( \| \nabla v_h
  \|_{L^2(K)}^2 + \| \nabla E_{K^{\varrho_i}, \omega}^m v_h
  \|_{L^2(\Bw{K^{\varrho_i}})}^2) \\
  & \leq C \sum_{K \in \MThG} ( \| \nabla v_h \|_{L^2(K)}^2 + \|
  \nabla v_h \|_{L^2(K^{\varrho_i})}^2) \leq C \| \nabla v_h
  \|_{L^2(\Ohi)}^2, \quad \forall v_h \in V_{h, i}^m,
\end{align*}
which indicates the upper bound of \eqref{eq_ghP1}. 
We again apply the triangle inequality to deduce that
\begin{align*}
  & \| \nabla v_h \|_{L^2(\MThG)}^2  = \sum_{K \in \MThG} \| \nabla v_h
  \|_{L^2(K)}^2 \\
  & \leq C \sum_{K \in \MThG} (\|\nabla (v_h - E_{
  K^{\varrho_i}, \omega}^m v_h) \|_{L^2(K)}^2 + \| \nabla E_{
  K^{\varrho_i}, \omega}^m
  v_h\|_{L^2(K)}^2) \\
  & \leq C \big( g_{h, i}(v_h, v_h) + \sum_{K \in \MThG}  \|\nabla
  E_{K^{\varrho_i}, \omega}^m v_h\|_{L^2(\Bw{K^{\varrho_i}})}^2
  \big) \\
  & \leq C \big( g_{h, i}(v_h, v_h) + \sum_{K \in \MThG} \| \nabla
  v_h\|_{L^2(K^{\varrho_i})}^2 \big) \leq C \big(g_{h, i}(v_h, v_h) +
  \| \nabla v_h \|_{L^2(\Ohic)}^2 \big), \quad \forall v_h \in V_{h,
  i}^m,
\end{align*}
which gives the lower bound of \eqref{eq_ghP1}.
We next turn to the property \tu{P2}.
Given any $v \in H^{s+1}(\Omega)$, for any $K \in \MThG$, there exists
$v_{K, \omega} \in \mb{P}_m(\Bw{K^{\varrho_i}})$ such that 
\begin{displaymath}
  \| \nabla( v - v_{K, \omega}) \|_{L^2(\Bw{K^{\varrho_i}})} \leq C
  h_K^{t} \| v\|_{H^{s+1}(\Bw{K^{\varrho_i}})}, \quad t = \min(s, m).
\end{displaymath}
Let $v_h :=  \Pi_{V_{h, i}^m} v$ and set $v_K := v_h|_K$, there holds
\begin{align*}
  &\ghinorm{v_h}^2  = \sum_{K \in \MThG} \| \nabla(v_h  - E_{
  K^{\varrho_i}, \omega}^m v_h) \|_{L^2(K)}^2 = \sum_{K \in \MThG} \|
  \nabla( v_K - E_{K^{\varrho_i}, \omega}^m v_{K^{\varrho_i}})
  \|_{L^2(K)}^2 \\
  & \leq C \sum_{K \in \MThG} (\| \nabla (v_K - v_{K, \omega})
  \|_{L^2(K)}^2 +  \| \nabla (E_{ K^{\varrho_i}, \omega}^m (v_{K,
  \omega} - v_{K^{\varrho_i}})) \|_{L^2(K)}^2) \\
  & \leq C \sum_{K \in \MThG} ( \| \nabla (v_K - v_{K, \omega})
  \|_{L^2(K)}^2 +  \| \nabla(v_{K, \omega} -  v_{K^{\varrho_i}})
  \|_{L^2(K^{\varrho_i})}^2) \\ 
  & \leq C  \sum_{K \in \MThG} ( \| \nabla v - \nabla v_{K, \omega}
  \|_{L^2(\Bw{K})}^2 + \|\nabla v - \nabla v_K \|_{L^2(K)}^2 + \|
  \nabla v - \nabla v_{K^{\varrho_i}} \|_{L^2(K^{\varrho_i})}^2) \\
  & \leq C h^{2t} \| v \|_{H^{s+1}(\Omega)}^2.
\end{align*}
The estimate \eqref{eq_ghP2} is reached.
In the numerical scheme, the bilinear form $g_{h, i}(\cdot, \cdot)$ is
employed to guarantee both the stability near the interface and the
uniform upper bound of the condition number to the final linear
system.

We next establish an inverse-type estimate on the interface for every
cut $K \in \MThG$, based on the disk $\Bt{K}$.
We begin with the case where $\Gamma$ is $C^2$. 
For any cut $K \in \MThG$, $\Bt{K}$ is also cut by $\Gamma$, and we
let $\Gamma_{\Bt{K}} := \Gamma \cap \Bt{K}$ here.

\begin{lemma}
  For the $C^2$ interface $\Gamma$, there exists a constant $h_0$
  such that for any $h < h_0$, there holds 
  \begin{equation}
    \| w \|_{L^{\infty}(\Gamma_K)} \leq C h_K^{(1-d)/2} (  \| w
    \|_{L^2(\Gamma_{\Bt{K}})} + h_K^{1/2} \| \nabla w
    \|_{L^2(K)}), \quad \forall w \in \mb{P}_m(\Bt{K}), \quad
    \forall K \in \MThG.
    \label{eq_C2inverse}
  \end{equation}
  \label{le_C2inverse}
\end{lemma}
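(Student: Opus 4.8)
The plan is to reduce \eqref{eq_C2inverse} to a scale-free statement on a reference configuration, and then exploit that, at the length scale $h_K$, a $C^2$ interface is only an $O(h_K)$ $C^1$-perturbation of its tangent hyperplane. First I would rescale by the affine map $\bm{y} = (\bm{x} - \bm{x}_K)/h_K$, which sends $\Bt{K}$ to the unit ball $B(\bm{0},1)$, the element $K$ to a shape-regular simplex $\wh{K}$, the interface piece to a rescaled surface $\wh{\Gamma}$, and $w$ to a polynomial $\wh{w} \in \mb{P}_m(B(\bm{0},1))$. Tracking the powers of $h_K$ (the surface measure on $\Gamma_{\Bt{K}}$ scales like $h_K^{d-1}$, and $\|\nabla w\|_{L^2(K)} = h_K^{(d-2)/2}\|\nabla \wh{w}\|_{L^2(\wh{K})}$), the two prefactors combine so that \eqref{eq_C2inverse} becomes exactly the scale-invariant inequality
\[
  \| \wh{w} \|_{L^{\infty}(\wh{\Gamma}\cap \wh{K})} \leq C \big( \| \wh{w} \|_{L^2(\wh{\Gamma}\cap B(\bm{0},1))} + \| \nabla \wh{w} \|_{L^2(\wh{K})} \big), \qquad \wh{w} \in \mb{P}_m(B(\bm{0},1)),
\]
which I must prove with $C$ independent of $K$ and of how $\Gamma$ cuts the mesh.

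Next I would record two geometric facts underpinning the uniformity of $C$. Using the barycenter property of Appendix \ref{sec_app}, the barycenter $\bm{x}_K$ has distance at most $\tfrac{d}{d+1}h_K$ from each vertex, so $\wh{K} \subset B(\bm{0}, \tfrac{d}{d+1})$; consequently any point of $\wh{\Gamma}\cap\wh{K}$ lies at distance at most $\tfrac{d}{d+1}<1$ from the origin. Hence the tangent hyperplane $P$ to $\wh{\Gamma}$ at a chosen point of $\wh{\Gamma}\cap\wh{K}$ meets $B(\bm{0},1)$ in a $(d-1)$-disk whose radius is bounded below by a fixed $c_0>0$. Secondly, since $\Gamma$ is $C^2$ (hence of bounded curvature), for $h<h_0$ the surface $\wh{\Gamma}$ is, in coordinates adapted to $P$, the graph $y_d = a + \phi(\bm{y}')$ of a function with $\|\phi\|_{C^1} \leq \epsilon = O(h_K)$ arbitrarily small. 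Throughout, I will freely move gradient norms between $\wh{\Gamma}$, the plane $P$, the ball $B(\bm{0},1)$ and the element $\wh{K}$ using the polynomial norm equivalences already employed to derive \eqref{eq_EKL2}–\eqref{eq_EKH1} (in particular $\|\nabla \wh{w}\|_{L^2(B(\bm{0},1))} \leq C\|\nabla \wh{w}\|_{L^2(\wh{K})}$, valid because $\wh{K}$ contains a ball of fixed radius).

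The estimate itself would then follow from a three-link chain. (i) Replace the curved supremum by the flat one: if $\wh{w}$ attains its maximum on $\wh{\Gamma}\cap\wh{K}$ at a point whose vertical projection to $P$ is $\wh{\bm{x}}_0 \in D := P\cap B(\bm{0},1)$, then by the fundamental theorem of calculus along a vertical segment of length $\leq\epsilon$, $\|\wh{w}\|_{L^{\infty}(\wh{\Gamma}\cap\wh{K})} \leq \|\wh{w}\|_{L^{\infty}(D)} + \epsilon\,\|\nabla\wh{w}\|_{L^{\infty}(B(\bm{0},1))} \leq \|\wh{w}\|_{L^{\infty}(D)} + C\|\nabla\wh{w}\|_{L^2(\wh{K})}$. (ii) Apply the flat finite-dimensional norm equivalence $\|\wh{w}\|_{L^{\infty}(D)} \leq C\|\wh{w}\|_{L^2(D')}$ on a concentric sub-disk $D'$ of radius $c_0/2$; the constant is uniform because $D$ has radius in $[c_0,1]$ and $D'$ has radius $c_0/2$. (iii) Transfer this flat $L^2$ norm to the surface: writing both integrals over the projection variable $\bm{y}'$, Minkowski's inequality and Cauchy–Schwarz along vertical segments give $\|\wh{w}\|_{L^2(D')} \leq \|\wh{w}(\cdot,a+\phi(\cdot))\|_{L^2(\mc{O}_{D'})} + \epsilon^{1/2}\|\nabla\wh{w}\|_{L^2(\text{slab})}$, and the first term is bounded by $\|\wh{w}\|_{L^2(\wh{\Gamma}\cap B(\bm{0},1))}$ once $\epsilon$ is small enough that the surface above $D'$ still lies in $B(\bm{0},1)$. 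Concatenating (i)–(iii) yields the reference inequality, and undoing the rescaling recovers \eqref{eq_C2inverse}.

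The main obstacle is precisely the uniformity of the constant across all admissible cuts, and this is where both the hypothesis $h<h_0$ and the gradient term are essential. The shrunk disk $D'$ in step (iii) is the decisive technical device: working on the full disk $D$ would leave a boundary sliver on which $\wh{w}$ could be as large as $\|\wh{w}\|_{L^{\infty}(B(\bm{0},1))}$, a quantity that surface norms cannot control (e.g. $\wh{w}=y_d$ vanishes on a flat $P$), so the naive transfer fails; choosing $D'$ compactly inside $B(\bm{0},1)$ removes the sliver entirely. The non-degeneracy $c_0>0$ from the barycenter property prevents the flat norm equivalence in step (ii) from degenerating when the cut is small, while the $C^2$ smallness $\epsilon=O(h_K)$ is what keeps every curvature-induced error collapsed into a clean multiple of $\|\nabla\wh{w}\|_{L^2(\wh{K})}$.
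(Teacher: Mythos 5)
Your argument is correct, and although it shares the paper's three-link skeleton --- (i) pass from the curved supremum to a flat one at the cost of a gradient term, using that at scale $h_K$ the $C^2$ interface deviates from a flat reference by $O(h_K^2)$; (ii) a flat polynomial inverse estimate on a piece whose size $\gtrsim h_K$ is guaranteed by the barycenter bound \eqref{eq_app_bc}; (iii) transfer of the flat $L^2$ norm back to $\Gamma_{\Bt{K}}$ --- its execution differs from the paper's in two substantive ways. First, you rescale to the unit ball and prove one scale-free inequality, with the tangent plane of $\wh{\Gamma}$ as the flat reference; the paper works at the original scale with a dimension-dependent flat object (in 2D the chord $e_0$ through $\Gamma \cap \partial \Bt{K}$, in 3D the cross-sections $B_0$, $B_1$ of the $O(h_K^2)$ strip with the projection chain $\hat{B}_1 \subset U_0 \subset B_0$), so your graph formulation treats $d = 2, 3$ (indeed any $d$) uniformly. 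Second, your step (iii) replaces the paper's divergence-theorem identity with $\bm{v} = w \un_0$ over the thin region $B_{\tau}^0$ --- which needs the normal alignment $|\un_0 \cdot \un| \geq \frac{1}{2}$, produces the product $h_K^3 \| w \|_{L^{\infty}(B_{\tau}^0)} \| \nabla w \|_{L^{\infty}(B_{\tau}^0)}$, and therefore forces the kick-back term $C h_K \| w \|_{L^{\infty}(\Gamma_{\Bt{K}})}$ in \eqref{eq_wLinftyGamma} to be absorbed for small $h$ --- by a direct fundamental-theorem-of-calculus plus Cauchy--Schwarz transfer along normal segments above the shrunk concentric disk $D'$, so no absorption step is needed and $h < h_0$ enters only through the graph representation with $\| \phi \|_{C^1} = O(h_K)$ and the containment of the patch above $D'$ in $B(\bm{0}, 1)$; that containment is exactly the point where a naive transfer over the full disk $D$ would fail, and it plays the role of the paper's inclusion $\hat{B}_1 \subset U_0$. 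The uniformity of your constants rests on the same finite-dimensional norm equivalences the paper already invokes for \eqref{eq_EKL2}--\eqref{eq_EKH1}, and your geometric input (graph representation of a compact $C^2$ surface at scale $h_K$) is at the same level of rigor as the strip-width facts the paper quotes from \cite{Huang2017unfitted}. In short: the paper's identity avoids explicit graph coordinates, needing only the strip width and normal alignment, while your route buys a unified treatment of both dimensions, the elimination of the absorption step, and a more transparent accounting of why the constant is independent of how $\Gamma$ cuts the mesh.
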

\begin{proof}
  We first prove \eqref{eq_C2inverse} for the two-dimensional case. 
  For sufficiently small $h$,  $\Gamma$ intersects $\partial \Bt{K}$
  at two distinct points, see $\bm{x}_0,
  \bm{x}_1$ in Fig.~\ref{fig_interfacecircle2d}.
  Let $e_0$ be the edge connecting $\bm{x}_0$ to $\bm{x}_1$, 
  and we let $\un_0$ and $\un$ be the unit outward normal vectors on
  $e_0$ and $\Gamma_{\Bt{K}}$, respectively. 
  For sufficiently small $h$, there holds $\frac{1}{2} \leq |\un_0
  \cdot \un(\bm{x})| \leq 1$ for any $\bm{x} \in 
  \Gamma_{\Bt{K}}$ \cite{Huang2017unfitted, Chen1998interface}.
  Denoting by $B^0_{\tau}$ the domain enclosed by $e_0$ and the curve
  $\Gamma_{\Bt{K}}$, we have that $|B^0_{\tau}| \leq C h_K^3$
  and $\text{width}(B^0_{\tau}) \leq Ch_K^2$.
  Let $\bm{y}$ be any point on $e_0$, there holds $|\bm{x}_K - \bm{y}|
  < \frac{2h_K}{3}$ (see the property \eqref{eq_app_bc} in Appendix), 
  which implies the distance between $\bm{x}_K$ and $e_0$ is less than
  $\frac{2h_K}{3}$.
  Since $\Bt{K}$ has radius $h_K$, we know that $|e_0| \geq C
  h_K$ for some constant $C > 0$. 
  Combining with the inverse estimate, we find that $\|w
  \|_{L^{\infty}(e_0)} \leq C h_K^{-1/2} \| w \|_{L^2(e_0)}$. 
  We let $\bm{z} \in \Gamma_{\Bt{K}}$ be the point such that
  $|w(\bm{z})| = \| w\|_{L^{\infty}(\Gamma_{\Bt{K}})}$, and then 
  there exists $\bm{\xi}_{\bm{z}} \in e_0$ such that $|\bm{z} -
  \bm{\xi}_{\bm{z}}| \leq Ch_K^2$.
  We derive that 
  \begin{align}
    \| w \|_{L^{\infty}(\Gamma_{\Bt{K}})} & \leq |
    w(\bm{\xi}_{\bm{z}})| + Ch_K^2 \| \nabla w\|_{L^{\infty}(\Bt{K})} \leq
    \| w \|_{L^{\infty}(e_0)} + Ch_K \| \nabla w\|_{L^2(\Bt{K})} \nonumber
    \\
    & \leq \| w \|_{L^{\infty}(e_0)} + Ch_K \| \nabla w\|_{L^2(K)}
    \leq C h_K^{-1/2} ( \| w \|_{L^2(e_0)} + h_K^{3/2} \| \nabla w
    \|_{L^2(K)}). \label{eq_wLinfty}
  \end{align}
  Let $\bm{v} := w \un_0$ be a vector-valued function. 
  We obtain that
  \begin{align*}
    \int_{e_0} \un_0 \cdot \bm{v} w \dx{s} + 
    \int_{\Gamma_{\Bt{K}}} \un \cdot \bm{v} w \dx{s} & =
    \int_{B_{\tau}^0} \bm{v} \cdot \nabla w \dx{x} + 
    \int_{B_{\tau}^0} \nabla \cdot \bm{v} w \dx{x} \\
    & \leq Ch_K^3 \| w \|_{L^{\infty}(B_{\tau}^0)} \| \nabla  w
    \|_{L^{\infty}(B_{\tau}^0)},
  \end{align*}
  which brings that 
  \begin{align}
    \| w\|_{L^2(e_0)}^2 & \leq C ( \| w \|_{L^2(\Gamma_{\Bt{K}})}^2 + 
    h_K^{3} \| w \|_{L^{\infty}(B_{\tau}^0)} \| \nabla  w
    \|_{L^{\infty}(B_{\tau}^0)}) 
    \label{eq_wL2} \\
    & \leq C ( \| w \|_{L^2(\Gamma_{\Bt{K}})}^2 + 
    h_K^{3} \| w \|_{L^{\infty}(B_{\tau}^0)}^2 + h_K^3 \| \nabla  w
    \|_{L^{\infty}(B_{\tau}^0)}^2). \nonumber \\
    & \leq C ( \| w \|_{L^2(\Gamma_{\Bt{K}})}^2 + 
    h_K^{3} \| w \|_{L^{\infty}(B_{\tau}^0)}^2 + h_K \| \nabla  w
    \|_{L^{2}(K)}^2). \nonumber
  \end{align}
  Similar to \eqref{eq_wLinfty}, we have that 
  \begin{align*}
    \| w \|_{L^{\infty}(B_{\tau}^0)} \leq \| w
    \|_{L^{\infty}(\Gamma_{\Bt{K}})} + C h_K^2 \| \nabla w
    \|_{L^{\infty}(\Bt{K})} \leq  \| w
    \|_{L^{\infty}(\Gamma_{\Bt{K}})}  + C h_K \| \nabla w \|_{L^2(K)}.
  \end{align*}
  Collecting all above estimate gives that
  \begin{equation}
    \|w \|_{L^{\infty}(\Gamma_{\Bt{K}})} \leq C h_K^{-1/2} ( \| w
    \|_{L^{2}(\Gamma_{\Bt{K}})} + h_K^{1/2}  \| \nabla w \|_{L^{2}(K)}
    ) + Ch_K \| w \|_{L^{\infty}(\Gamma_{\Bt{K}})}.
    \label{eq_wLinftyGamma}
  \end{equation}
  For sufficiently small $h$, the desired estimate
  \eqref{eq_C2inverse} is reached in two dimensions.

  \begin{figure}[htp]
    \centering
    \begin{minipage}[t]{0.31\textwidth}
      \centering
      \begin{tikzpicture}[scale=1.25]
        \coordinate (x0) at (-0.95, 0.639); 
        \coordinate (x1) at (0.98, 0.175); 
        \coordinate (x2) at (1.4625, 0.059); 
        \coordinate (y0) at (-0.718, 1.60);
        \draw[thick] (0, 0.35) circle [radius=1]; 
        \draw[thick, red] (x0) [in = 130, out = 25] to
        (x1);
        \draw[thick, red] (-1.25, 0.5) [in = 205, out = 30] to
        (x0);
        \draw[thick, red] (x1) [out = -50, in = 100]
        to (1.2, -0.2);
        \draw[thick] (-0.6, 0.3) -- (0.5, 0) -- (0.2, 1.1) -- 
        (-0.6, 0.3); 
        \node at (0, 0.5) {\small $K$};
        \node at (0.2, -0.3) {\small $\Bt{K}$};
      \end{tikzpicture}
    \end{minipage}
    \hfill
    \begin{minipage}[t]{0.31\textwidth}
      \centering
      \begin{tikzpicture}[scale=1.25]
        \coordinate (x0) at (-0.95, 0.639); 
        \coordinate (x1) at (0.98, 0.175); 
        \coordinate (x2) at (1.4625, 0.059); 
        \coordinate (y0) at (-0.718, 1.60);
        \draw[thick] (0, 0.35) circle [radius=1]; 
        \draw[thick, red] (x0) [in = 130, out = 25] to
        (x1);
        \draw[thick, red, dashed] (-1.25, 0.5) [in = 205, out = 30] to
        (x0);
        \draw[thick, red, dashed] (x1) [out = -50, in = 100]
        to (1.2, -0.2);
        \draw[thick, fill=black] (x0) circle [radius=0.035];
        \draw[thick, fill=black] (x1) circle [radius=0.035];
        \draw[thick, ->] (x0) -- (x2);
        \draw[thick, ->] (x0) -- (y0);
        \node[below] at ($(x0) + (-0.25, 0.3)$) {\small $\bm{x}_0$};
        \node[] at ($(x1) + (0.25, 0.1)$) {\small $\bm{x}_1$};
        \node[below] at ($0.5*(x0) + 0.5*(x1)$) {\small $e_0$};
        \node at (0.2, -0.3) {\small $\Bt{K}$};
        \node[below] at ($(0.7, 1.) + (-0.8, 0.2)$) {\small
        $\Gamma_{\Bt{K}}$};
      \end{tikzpicture}
    \end{minipage}
    \hfill
    \begin{minipage}[t]{0.31\textwidth}
      \centering
      \begin{tikzpicture}[scale=1.25]
        \draw[thick, dashed] (0, 0) -- (3, 0); 
        \draw[thick, red] (0, 0) [out = 50, in = 130] to (2.5, 0);
        \draw[thick] (0, 0) -- (2.5, 0);
        \draw[thick, dashed] (0, 0) -- (0, 1.5); 
        \draw[thick, dashed] (0, 0) -- (3.2, 0); 
        \node[below] at (0, 0) {\footnotesize $\bm{x}_0$};
        \node[below] at (2.6, 0) {\footnotesize $\bm{x}_1$};
        \node[above] at(1., 0.5) {\footnotesize $\Gamma_{\Bt{K}}$};
        \draw[thick, fill=black] (0, 0) circle [radius=0.03];
        \draw[thick, fill=black] (2.5, 0) circle [radius=0.03];
        \draw[thick, <->] (0.85, 0) -- (0.85, 0.50);
        \node[right] at (0.85, 0.235) {\footnotesize $O(h_K^2)$};
        \draw[thick, ->] (1.95, 0) -- (1.95, -0.5);
        \node[right] at (1.95, -0.25) {$\un_0$};
        \draw[thick, ->] (2.15, 0.3) -- (2.43, 0.65);
        \node[left] at ($(2.325, 0.475) + (0.05, 0.1)$) {$\un$};
      \end{tikzpicture}
    \end{minipage}
    \caption{The interface intersects $\Bt{K}$ in two dimensions.}
    \label{fig_interfacecircle2d}
  \end{figure}
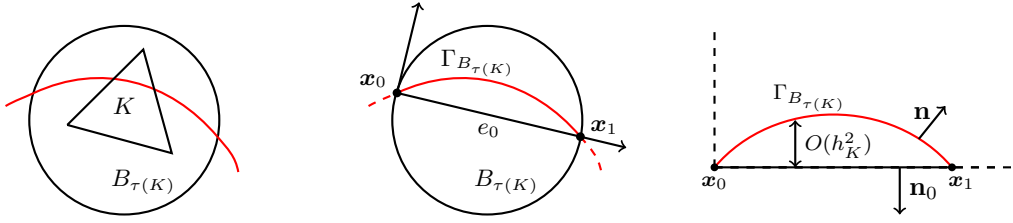

  The proof in three dimensions is similar. 
  For sufficiently small $h$, the intersection $\Gamma_{\Bt{K}}$ is
  contained in a strip of width $\delta \leq C h_K^2$ \cite[Lemma
  3.4]{Huang2017unfitted}. 
  We denote by $L_0$ and $L_1$ two parallel planes that contain the
  strip, see Fig.~\ref{fig_interfacecircle3d}, and the distance
  between two planes is less than $C h_K^2$. 
  Let $L_0$ be the one that is closer to $\bm{x}_K$, 
  and we let $B_0 := \Bt{K} \cap L_0$ and 
  $B_1 := \Bt{K} \cap L_1$. 
  We further define $B_{\tau}^0$ as the domain enclosed by $B_0$,
  $B_1$ and the ball $\Bt{K}$.
  We denote by $\bm{y}$ any point on $\Gamma_K$.  From
  \eqref{eq_app_bc}, it follows that $|\bm{y} - \bm{x}_K| \leq \frac{3
  h_K}{4}$. 
  For sufficiently small $h$, each $B_j$ is a disk with radius
  $r_j$ satisfying $r_j \geq C h_K$ for some constant $C > 0$.
  Let $\hat{B}_1$ and $U_0$ be the projection of $B_1$ and
  $\Gamma_{\Bt{K}}$ on $B_0$, respectively, and there holds
  $\hat{B}_1 \subset U_0 \subset B_0$.
  We have the following inverse estimate: 
  \begin{equation}
    \|v \|_{L^{\infty}(U_0)} \leq \| v \|_{L^{\infty}(B_0)} \leq C
    h_K^{-1} \| v \|_{L^2(B_0)} \leq C h_K^{-1} \| v
    \|_{L^2(\hat{B}_1)}, \quad \forall v \in \mb{P}_m(\Bt{K}).
    \label{eq_B01inverse}
  \end{equation}
  We let $\un_0, \un$ be the unit outward normal vectors on $B_0$ and
  $\Gamma_{\Bt{K}}$, respectively. 
  For small enough $h$, there holds $\frac{1}{2} \leq |\un_0 \cdot
  \un(\bm{x})| \leq 1$ for any $\bm{x} \in \Gamma_{\Bt{K}}$.
  Similar to \eqref{eq_wL2} and \eqref{eq_wLinfty}, we observe that
  \begin{align*}
    \|w \|_{L^{\infty}(\Gamma_{\Bt{K}})} \leq \| w
    \|_{L^{\infty}(U_0)} + C h_K^2 \| \nabla w \|_{L^{\infty}(\Bt{K})} \leq
    C h_K^{-1} ( \| w \|_{L^2(\hat{B}_1)} + h_K^{3/2} \| \nabla w
    \|_{L^2(K)}),
  \end{align*}
  and
  \begin{align*}
    \| w \|_{L^2(\hat{B}_1)}^2 \leq C( \| w
    \|_{L^2(\Gamma_{\Bt{K}})}^2 + h_K^3 \| w
    \|_{L^{\infty}(B_{\tau}^0)}^2 +  h_K^4 \| w
    \|_{L^{\infty}(B_{\tau}^0)} \| \nabla w
    \|_{L^{\infty}(B_{\tau}^0)}).
  \end{align*}
  As in the two-dimensional case, we have that
  \begin{displaymath}
    \| w
    \|_{L^{\infty}(B_{\tau}^0)} \leq \| w
    \|_{L^{\infty}(\Gamma_{\Bt{K}})} + C h_K^2 \| \nabla w
    \|_{L^{\infty}(B_{\tau}^0)} \leq  \| w
    \|_{L^{\infty}(\Gamma_{\Bt{K}})} + C h_K^{1/2} \| \nabla w
    \|_{L^{2}(K)}.
  \end{displaymath}
  Combining all above estimates yields that 
  \begin{displaymath}
    \|w \|_{L^{\infty}(\Gamma_{\Bt{K}})} \leq C h_K^{-1} ( \| w
    \|_{L^{2}(\Gamma_{\Bt{K}})} + h_K^{1/2}  \| \nabla w \|_{L^{2}(K)}
    ) + Ch_K \| w \|_{L^{\infty}(\Gamma_{\Bt{K}})},
  \end{displaymath}
  which leads to the estimate \eqref{eq_C2inverse} in three
  dimensions with a sufficiently small $h$.
  The proof is complete.

  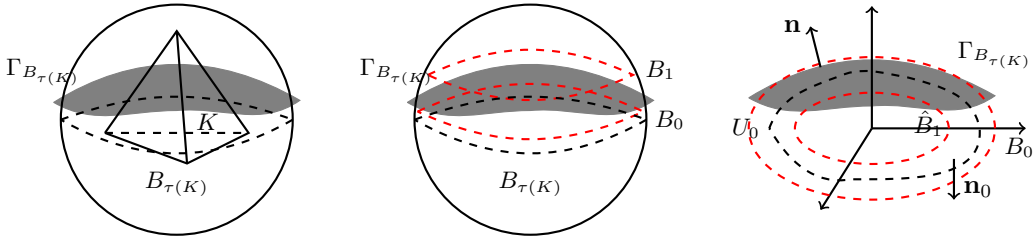
\begin{figure}[htp]
    \begin{minipage}[t]{0.31\textwidth}
      \centering
      \begin{tikzpicture}[scale=1.35]
        \coordinate (A0) at (-0.7, -0.0);
        \coordinate (A1) at (0.1, -0.3);
        \coordinate (A2) at (0.7, -0.0);
        \coordinate (A3) at (0., 1);
        \draw[fill, gray] (-1.2, 0.3) to [out=32, in = 150] (1.2,
        0.32) to [out = 215, in = 5] (0, 0.22) 
        to [out = 180, in = -30] (-1.2, 0.3);
        \draw[thick] (0, 0.13) circle [radius=1.13]; 
        \draw[thick] (A0) -- (A1) -- (A2) -- (A3) -- (A0);
        \draw[thick] (A1) -- (A3);
        \draw[thick, dashed] (A0) -- (A2);
        \node[above] at ($0.5*(A1) + 0.5*(A2) + (-0.1, 0.09)$) {\small $K$};
        \node at (0, -0.5) {\small $\Bt{K}$};
        \node at (-1.3, 0.6) {\small $\Gamma_{\Bt{K}}$};
        \draw[thick, dashed] (-1.13, 0.13) to [out = 20, in = 160]
        (1.13, 0.13);
        \draw[thick, dashed] (1.13, 0.13) to [out = 210, in = -30]
        (-1.13, 0.13);
      \end{tikzpicture}
    \end{minipage}
    \hfill
    \begin{minipage}[t]{0.31\textwidth}
      \centering
      \begin{tikzpicture}[scale=1.35]
        \draw[fill, gray] (-1.2, 0.3) to [out=32, in = 150] (1.2,
        0.32) to [out = 215, in = 5] (0, 0.22) 
        to [out = 180, in = -30] (-1.2, 0.3);
        \draw[thick] (0, 0.13) circle [radius=1.13]; 
        \draw[thick, dashed, red] (-1., 0.57) [out=25, in = 155] to
        (1, 0.57) [out = 205, in = -25] to (-1., 0.57);
        \draw[thick, dashed, red] (-1.1, 0.21) [out=25, in = 155] to
        (1.1, 0.21) [out = 205, in = -25] to (-1.1, 0.21);
        \node at (0, -0.5) {\small $\Bt{K}$};
        \node at (-1.3, 0.6) {\small $\Gamma_{\Bt{K}}$};
        \node[right] at (1.12, 0.13) {\small $B_0$};
        \node[right] at (1.03, 0.62) {\small $B_1$};
        \draw[thick, dashed] (-1.13, 0.13) to [out = 20, in = 160]
        (1.13, 0.13);
        \draw[thick, dashed] (1.13, 0.13) to [out = 210, in = -30]
        (-1.13, 0.13);
      \end{tikzpicture}
    \end{minipage}
    \hfill
    \begin{minipage}[t]{0.31\textwidth}
      \centering
      \begin{tikzpicture}[scale=1.35]
        \draw[white] (0, -0.85) circle [radius=0.2];
        \draw[fill, gray] (-1.2, 0.3) to [out=32, in = 150] (1.2,
        0.32) to [out = 215, in = 5] (0, 0.22) 
        to [out = 180, in = -30] (-1.2, 0.3);
        \draw[thick, ->] (0, 0) -- (0, 1.2); 
        \draw[thick, ->] (0, 0) -- (1.5, 0); 
        \draw[thick, ->] (0, 0) -- (-0.5, -0.8);
        \draw[thick, dashed, red] ellipse (0.75 and 0.35);
        \draw[thick, dashed, red] ellipse (1.2 and 0.7);
        \node[below right] at (1.2, 0) {\small $B_0$};
        \node[] at (0.55, 0.05) {\small $\hat{B}_1$};
        \draw[thick, dashed] (-1, 0) to [out = 70, in = 200] (-0.35,
        0.5) to [out = 25, in = 170] (0.35, 0.5) to [out = -15, in =
        100] (1.05, 0) to [out = -85, in = 0] (-0.2, -0.5) to
        [out=190, in = -60] (-1, 0);
        \node[left] at (-1, 0) {\small $U_0$};
        \node[above] at (1.2, 0.5) {\small $\Gamma_{\Bt{K}}$};
        \draw[thick, ->] (0.8, -0.3) -- (0.8, -0.7);
        \node[right] at (0.8, -0.6) {$\un_0$};
        \draw[thick, ->] (-0.5, 0.6) -- (-0.6, 1);
        \node[left] at (-0.6, 1) {$\un$};
      \end{tikzpicture}
    \end{minipage}
    \caption{The interface intersects $\Bt{K}$ in three dimensions.}
    \label{fig_interfacecircle3d}
  \end{figure}
\end{proof}
For the case that $\Gamma$ is polygonal (polyhedral), let  
$\Gamma_j(1 \leq j \leq J)$ denote the sides of $\Gamma$, 
where each $\Gamma_j$ is a line segment (polygon). 
For every $K \in \MThG$, we let $\Gamma_{K, k_j}(1 \leq j \leq J_K)$ 
be the sides intersecting $K$, where $\Gamma_{K, k_j}$ are parts of
$\Gamma_{k_j}$. 
In this case, we define $\Gamma_{\Bt{K}, k_j} := \Gamma_{k_j} \cap
\Bt{K}$ and define $\Gamma_{\Bt{K}} := \cup_{1 \leq j \leq J_K}
\Gamma_{\Bt{K}, k_j}$.
\begin{lemma}
  If $\Gamma$ is polygonal (polyhedral), there exists a constant $h_1$
  such that for any $h < h_1$, there holds 
  \begin{equation}
    \| w \|_{L^{\infty}(\Gamma_K)} \leq C  h_K^{(1 - d)/2}
    \| w \|_{L^2(\Gamma_{\Bt{K}})}, \quad \forall w \in
    \mb{P}_m(B_{\tau(K)}), \quad \forall K \in \MThG.
    \label{eq_polyinverse}
  \end{equation}
  \label{le_polyinverse}
\end{lemma}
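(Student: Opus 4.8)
The plan is to exploit the flatness of each side of $\Gamma$, which, unlike the $C^2$ case treated in Lemma \ref{le_C2inverse}, lets us dispense with the gradient term on the right-hand side entirely. First I would pick a point $\bm{z} \in \Gamma_K$ attaining $\| w \|_{L^{\infty}(\Gamma_K)}$; it lies on some side $\Gamma_{k_{j^*}}$, which is contained in an affine hyperplane $H$. By the barycenter property \eqref{eq_app_bc}, every point of $\Gamma_K$, and in particular $\bm{z}$, satisfies $|\bm{z} - \bm{x}_K| \leq \tfrac{3}{4} h_K$, so $\mathrm{dist}(\bm{x}_K, H) \leq \tfrac{3}{4} h_K < h_K$. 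Hence $D := H \cap \Bt{K}$ is a $(d-1)$-dimensional ball of radius $r \geq \tfrac{\sqrt{7}}{4} h_K$, the point $\bm{z}$ lies in its interior, and $B(\bm{z}, \tfrac{1}{4} h_K) \subset \Bt{K}$.

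The heart of the argument is a geometric claim: for $h$ sufficiently small, $\Gamma_{\Bt{K}, k_{j^*}} = \Gamma_{k_{j^*}} \cap \Bt{K}$ contains a $(d-1)$-dimensional ball $S \subset H$ of radius at least $c_1 h_K$, with $c_1 > 0$ depending only on the fixed geometry of $\Gamma$ and on shape-regularity. To see this I would use that each side $\Gamma_{k_{j^*}}$ is a fixed segment (polygon) of diameter $\gtrsim 1 \gg h_K$: within $B(\bm{z}, \tfrac{1}{4} h_K)$ only the edges of $\Gamma_{k_{j^*}}$ meeting at the corner of $\Gamma$ nearest $\bm{z}$ can bound the region, and these meet at interior angles bounded below by a fixed $\beta_{\min} > 0$. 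Consequently $\Gamma_{k_{j^*}} \cap B(\bm{z}, \tfrac{1}{4} h_K)$ is locally a circular sector of opening at least $\beta_{\min}$ (a sub-segment when $d=2$), which contains an inscribed $(d-1)$-ball $S$ of radius $\gtrsim h_K \sin(\beta_{\min}/2)$. The threshold $h_1$ is dictated by requiring $\tfrac{1}{4} h_K$ to stay below the minimal feature size of $\Gamma$, so that no two non-adjacent edges of $\Gamma_{k_{j^*}}$ can interfere inside $\Bt{K}$.

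With $S$ in hand, the remaining step is an inverse estimate on the single hyperplane $H$. Since $w \in \mb{P}_m(\Bt{K})$, the restriction $w|_H$ is a polynomial of degree at most $m$ in $d-1$ variables, and $S \subseteq D$ are both $(d-1)$-balls of radii comparable to $h_K$. Rescaling $D$ to a reference ball and invoking the equivalence of norms on the finite-dimensional space $\mb{P}_m$, the constant depending only on $m$, $d$ and the fixed ratio of radii, yields $\| w \|_{L^{\infty}(D)} \leq C h_K^{-(d-1)/2} \| w \|_{L^2(S)}$, where the factor $h_K^{-(d-1)/2}$ comes from the scaling of the $(d-1)$-dimensional measure. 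Because $\bm{z} \in D$ and $S \subseteq \Gamma_{\Bt{K}, k_{j^*}} \subseteq \Gamma_{\Bt{K}}$, I then conclude
\begin{equation*}
  \| w \|_{L^{\infty}(\Gamma_K)} = |w(\bm{z})| \leq \| w \|_{L^{\infty}(D)} \leq C h_K^{-(d-1)/2} \| w \|_{L^2(S)} \leq C h_K^{(1-d)/2} \| w \|_{L^2(\Gamma_{\Bt{K}})},
\end{equation*}
which is exactly \eqref{eq_polyinverse}.

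I expect the main obstacle to be the geometric claim of the second paragraph, specifically the three-dimensional situation near a vertex of $\Gamma$ where $\bm{z}$ may sit close to the boundary of the flat piece $\Gamma_{k_{j^*}}$. There one must argue carefully that the portion of the face lying inside $\Bt{K}$ still contains an inscribed disk whose radius is proportional to $h_K$, with the proportionality governed by the fixed interface angles rather than by how $\Gamma$ cuts $\MTh$; everything else reduces to scaling and norm equivalence on $\mb{P}_m$.
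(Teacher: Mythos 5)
Your proposal is correct and follows essentially the same route as the paper: both arguments reduce \eqref{eq_polyinverse} to the measure lower bound $|\Gamma_{\Bt{K}, k_j}| \geq C h_K^{d-1}$ obtained from the barycenter property \eqref{eq_app_bc}, followed by an inverse/norm-equivalence estimate for $\mb{P}_m$ restricted to the hyperplane containing the side. The only difference is cosmetic: the paper gets the two-dimensional length bound via a chord from a point of $\Gamma_{K, k_j}$ to the intersection of the side with $\partial \Bt{K}$ and treats each side separately, while you work locally in $B(\bm{z}, \tfrac{1}{4}h_K)$ around the maximizing point using interior-angle sectors---which in fact supplies the three-dimensional vertex-case detail that the paper compresses into ``it is similar to conclude.''
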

\begin{proof}
  In two dimensions, for sufficiently small $h$, every side
  $\Gamma_{k_j}$ will intersect the circle $\partial \Bt{K}$, and we
  denote by $\bm{z}$ any point in the intersection. 
  Let $\bm{y}$ be any point on $\Gamma_{K, k_j}$, we have that 
  $|\bm{y} - \bm{x}_K| \leq \frac{2 h_K}{3}$ by \eqref{eq_app_bc}.
  From $|\bm{x}_K - \bm{z}| = h_K$,  we find that $|\bm{y} - \bm{z}|
  \geq \frac{h_K}{3}$, which implies $|\Gamma_{\Bt{K}, k_j}| =
  | \Gamma_{k_j} \cap \Bt{K}| \geq |\bm{y} - \bm{z}| \geq C h_K$. 
  From the inverse estimate, we have that 
  $\| w \|_{L^{\infty}(\Gamma_{K, k_j})} \leq C h_K^{-1/2} \| w
  \|_{L^2(\Gamma_{\Bt{K}, k_j})}$. 
  Then,
  the estimate \eqref{eq_polyinverse} is reached in two dimensions.
  In three dimensions, it is similar to conclude that $|
  \Gamma_{\Bt{K}, k_j}| \geq C h_K^2$, and then combining this with
  the inverse estimate on $\Gamma_{\Bt{K}, k_j}$ yields the desired
  estimate. 
  The proof is complete.
\end{proof}
Consequently, from \eqref{eq_C2inverse} and
\eqref{eq_polyinverse}, we have that for any $K \in \MThG$ and any
$v_h \in V_h^m$, there holds
\begin{equation}
  \| \jump{v_h} \|_{L^{\infty}(\Gamma_K)} \leq Ch_K^{(1-d)/2} (\|
  \jump{E_{K, \tau}^m v_h} \|_{L^2(\Gamma_{\Bt{K}})} + h_K^{1/2} (\|
  \nabla v_h^{\pi_0} \|_{L^2(K)} +  \|
  \nabla v_h^{\pi_1} \|_{L^2(K)})),
  \label{eq_jumpvh}
\end{equation}
where the jump $\jump{E_{K, \tau}^m v_h}|_{\Gamma_{\Bt{K}}} :=
((E_{K, \tau}^m v_h^{\pi_0})|_{\Gamma_{\Bt{K}}} -  (E_{K, \tau}^m
v_h^{\pi_1})|_{\Gamma_{\Bt{K}}})\un_{\Gamma}$.  
We define a bilinear form $s_h(\cdot, \cdot)$ on $V_h^m$ as
\begin{equation}
  s_h(v_h, w_h) := \sum_{K \in \MThG} \int_{\Gamma_{\Bt{K}}} h_K^{-1}
  \jump{E_{K, \tau}^m v_h} \cdot \jump{E_{K, \tau}^m w_h} \dbx{s},
  \quad \forall v_h, w_h \in V_h^m,
  \label{eq_shform}
\end{equation}
with the induced seminorm $\shnorm{v_h}^2 := s_h(v_h, v_h)$ for any
$v_h \in V_h^m$.

\begin{remark}
  The estimates \eqref{eq_jumpvh} and \eqref{eq_Vhdiff} are
  fundamental to our numerical analysis, especially for the low
  regularity case that $s < 1$.
  Both estimates strongly rely on the condition that the intersection 
  $\Gamma_{\Bt{K}}$ ($\Gamma_{\Bt{K}, k_j}$) satisfies 
  $|\Gamma_{\Bt{K}}| \geq C h_K^{d-1}$.  
  Roughly speaking, the set $\Gamma_{\Bt{K}}$ can be replaced by any
  set $\widetilde{\Gamma}_{K}$ satisfying $\Gamma_K \subset
  \widetilde{\Gamma}_K$ and $C_0 h_K^{d-1} \leq |
  \widetilde{\Gamma}_K| \leq C_1 h_K^{d-1}$.
  In addition to exact integration over $\Gamma_{\Bt{K}}$, 
  a suitable set $\widetilde{\Gamma}_K$ can also be constructed using
  the neighbouring elements of $K$.
  One can first assign a parameter $\kappa \in (0, 1)$, and for the
  cut element $K$, let $\mS(K) = \{K\}$ first. 
  If $|\Gamma_{\mS(K)}|(\Gamma_{\mS(K)} := \cup_{K' \in \mS(K)}
  \Gamma_{K'}) < \kappa h_K$, then seek a cut element $K'$ that is
  adjacent to an element in $\mS(K)$ and add it to $\mS(K)$.
  Repeat this process until $|\Gamma_{\mS(K)}| \geq \kappa h_K$, and
  finally set $ \widetilde{\Gamma}_K = \Gamma_{\mS(K)}$.
  \label{re_GammaK}
\end{remark}

From the estimates \eqref{eq_Vhnorm} and \eqref{eq_jumpvh}, we state
the following result. 
\begin{lemma}
  For any $w_h \in V_h^m$, there exists  $w_{h,\bc} \in V_{h, \bc}^m$
  such that for $i = 0, 1$, there holds
  \begin{equation}
    \|w_h^{\pi_i} - w_{h, \bc}^{\pi_i} \|_{L^2(\Ohi)} + h \|
    \nabla( w_h^{\pi_i} - w_{h, \bc}^{\pi_i}) \|_{L^2(\Ohi)} \leq
    Ch (\shnorm{w_h} + \Vhnorm{w_h}).
    \label{eq_Vhdiff}
  \end{equation}
  \label{le_Vhdiff}
\end{lemma}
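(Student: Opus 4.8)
The plan is to build $w_{h,\bc}$ by a nodal enriching (averaging) procedure that alters only the degrees of freedom located on cut elements, and then to control the resulting perturbation by the interface inverse estimate \eqref{eq_jumpvh}. Concretely, I would leave the zeroth component untouched, setting $w_{h,\bc}^{\pi_0} := w_h^{\pi_0}$, and define $w_{h,\bc}^{\pi_1} \in V_{h,1}^m$ through its Lagrange nodal values: at every node belonging to some cut element $K \in \MThG$ I set the value of $w_{h,\bc}^{\pi_1}$ equal to the corresponding nodal value of $w_h^{\pi_0}$, while at all remaining nodes I retain the value of $w_h^{\pi_1}$. Since every Lagrange node of a cut element is shared by $w_h^{\pi_0}$ and $w_h^{\pi_1}$ (a cut element lies in both active meshes), the two components of $w_{h,\bc}$ carry identical nodal data on each $K \in \MThG$ and hence coincide as polynomials there; this gives $w_{h,\bc}^{\pi_0}|_{\OhG} = w_{h,\bc}^{\pi_1}|_{\OhG}$, i.e. $w_{h,\bc} \in V_{h,\bc}^m$. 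Assumption \ref{as_empty} guarantees that cut-element nodes stay away from $\partial\Omega$, so the homogeneous boundary condition defining $V_{h,1}^m$ is preserved. For $i=0$ the left-hand side of \eqref{eq_Vhdiff} vanishes identically, so only the case $i=1$ needs work.

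For $i=1$, set $\phi := w_h^{\pi_1} - w_{h,\bc}^{\pi_1} \in V_{h,1}^m$. By construction $\phi$ is a finite element function whose only nonzero nodal values are the nodal jumps $j(\bm{x}) := w_h^{\pi_1}(\bm{x}) - w_h^{\pi_0}(\bm{x})$ at cut-element nodes $\bm{x}$, so that $\phi$ is supported on a fixed-width layer of elements around $\OhG$. Standard finite element scaling for the nodal basis then yields $\|\phi\|_{L^2(\Ohl)}^2 + h^2 \| \nabla \phi \|_{L^2(\Ohl)}^2 \leq C h^d \sum_{\bm{x}} |j(\bm{x})|^2$, where the sum runs over cut-element nodes. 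It therefore remains to bound $h^d \sum_{\bm{x}} |j(\bm{x})|^2$ by $C h^2(\shnorm{w_h} + \Vhnorm{w_h})^2$.

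The crux is to estimate a single nodal jump $|j(\bm{x})|$. Fix a cut element $K$ with $\bm{x} \in \overline{K}$ and consider the extended polynomial $p_K := E_{K,\tau}^m w_h^{\pi_0} - E_{K,\tau}^m w_h^{\pi_1} \in \mb{P}_m(\Bt{K})$, which coincides with $j$ on $K$ (the local $L^2$ projection acts as the identity on $w_h^{\pi_i}|_K \in \mb{P}_m(K)$). Since $\Gamma_K = K \cap \Gamma \neq \varnothing$ and every point of the convex set $K$ lies within $h_K$ of $\Gamma_K$, a mean-value argument gives $\|p_K\|_{L^{\infty}(K)} \leq \|p_K\|_{L^{\infty}(\Gamma_K)} + C h_K \| \nabla p_K \|_{L^{\infty}(\Bt{K})}$. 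The first term is exactly $\|\jump{w_h}\|_{L^\infty(\Gamma_K)}$ and is controlled by \eqref{eq_jumpvh}; for the second I would use the polynomial inverse estimate $\|\nabla p_K\|_{L^\infty(\Bt{K})} \leq C h_K^{-d/2}\|\nabla p_K\|_{L^2(\Bt{K})}$ on the ball $\Bt{K}$, followed by the extension stability \eqref{eq_EKH1} to reduce $\|\nabla p_K\|_{L^2(\Bt{K})}$ to $\|\nabla w_h^{\pi_0}\|_{L^2(K)} + \|\nabla w_h^{\pi_1}\|_{L^2(K)}$. Collecting these bounds, and using $|j(\bm{x})| \le \|p_K\|_{L^\infty(K)}$, I obtain $h_K^d|j(\bm{x})|^2 \leq C\big(h_K \|\jump{E_{K,\tau}^m w_h}\|_{L^2(\Gamma_{\Bt{K}})}^2 + h_K^2(\|\nabla w_h^{\pi_0}\|_{L^2(K)}^2 + \|\nabla w_h^{\pi_1}\|_{L^2(K)}^2)\big)$.

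Finally, summing over all cut-element nodes (each cut element carries $O(1)$ nodes and each node meets $O(1)$ cut elements) and invoking quasi-uniformity $h_K \le h$, the first contribution becomes $C h^2 \sum_{K \in \MThG} h_K^{-1}\|\jump{E_{K,\tau}^m w_h}\|_{L^2(\Gamma_{\Bt{K}})}^2 = C h^2 \shnorm{w_h}^2$ by the definition \eqref{eq_shform}, while the gradient contribution is bounded by $C h^2 (\|\nabla w_h^{\pi_0}\|_{L^2(\OhG)}^2 + \|\nabla w_h^{\pi_1}\|_{L^2(\OhG)}^2) \leq C h^2 \Vhnorm{w_h}^2$ via \eqref{eq_Vhnorm}. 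Combined with the scaling inequality this gives \eqref{eq_Vhdiff} for $i=1$, completing the proof. The main obstacle is precisely the passage in the third step from control of the jump on the interface piece $\Gamma_K$, which is all that \eqref{eq_jumpvh} directly provides, to control over the whole element $K$ (and hence over the nodal values of $\phi$); the gradient correction term supplied by the extension stability \eqref{eq_EKH1} is what closes this gap, and it is here that the low-regularity-friendly construction of $E_{K,\tau}^m$ and $s_h$ plays its decisive role.
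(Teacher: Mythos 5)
Your proof is correct, and its engine is the same as the paper's: perturb the nodal values on the cut layer, bound each nodal jump $|w_h^{\pi_0}(\bu)-w_h^{\pi_1}(\bu)|$ by the interface sup-norm via \eqref{eq_jumpvh} plus a mean-value correction of size $h_K\|\nabla(w_h^{\pi_0}-w_h^{\pi_1})\|_{L^{\infty}(K)}$ handled by an inverse estimate, then sum against the $O(h_K^{d/2})$ nodal-basis scaling. Where you genuinely depart from the paper is the construction itself: the paper assigns to \emph{both} components the average $\frac{1}{2}(w_h^{\pi_0}(\bu)+w_h^{\pi_1}(\bu))$ at every node of $\overline{\OhG}$ (see \eqref{eq_newhatw}), perturbing the two sides symmetrically, whereas you copy the $\pi_0$ values onto the $\pi_1$ nodes, which makes the $i=0$ case of \eqref{eq_Vhdiff} trivial. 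For Lemma \ref{le_Vhdiff} in isolation either choice works, and both are linear in $w_h$, so either one defines the operator $\mE_h$. The symmetric average, however, buys an extra property that your one-sided copy loses: for $z_h := w_h - \mE_h w_h$ it yields $\aver{z_h}|_{\Gamma} = 0$, which the proof of Theorem \ref{th_DGerror} invokes explicitly when splitting $l_h(z_h) - a_h(v_h, z_h)$ into the terms \tRomannum{1}--\tRomannum{5}; with your construction one gets $\aver{z_h}|_{\Gamma} = \frac{1}{2}(w_h^{\pi_1}-w_h^{\pi_0})|_{\Gamma}$, nonzero in general, so the downstream error analysis would need reworking or your construction would need to be symmetrized. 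Two minor remarks: your detour through $p_K = E_{K,\tau}^m w_h^{\pi_0} - E_{K,\tau}^m w_h^{\pi_1}$ and \eqref{eq_EKH1} for the gradient term is unnecessary, since on $K$ one has $p_K = (w_h^{\pi_0}-w_h^{\pi_1})|_K$ and the inverse estimate $\|\nabla (w_h^{\pi_0}-w_h^{\pi_1})\|_{L^{\infty}(K)} \leq C h_K^{-d/2}(\|\nabla w_h^{\pi_0}\|_{L^2(K)}+\|\nabla w_h^{\pi_1}\|_{L^2(K)})$ on $K$ itself suffices, which is exactly what the paper does; and your claim that every cut element belongs to both active meshes rests on the same implicit nondegeneracy ($K \cap \Gamma \neq \varnothing$ implies $K$ meets both open subdomains) that the paper itself uses in \eqref{eq_newhatw}, so that is not a gap relative to the paper.
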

\begin{proof}
  For the submesh $\MThi(i = 0, 1)$, we let $\mN_{h, i} := \{
  \bu_j^i\}_{j=0}^{n_{m, i}}$ be the set of Lagrange nodes associated
  with $\MThi$, and let $\{ \varphi_{\bu_j^i}\}_{j=0}^{n_{m, i}}$
  denote Lagrange basis functions corresponding to the node $\bu_j^i$,
  i.e. $\varphi_{\bu_j^i}(\bu_k^i) = \delta_{jk}$.
  Then, $V_{h, i}^m = \text{span}(\{\varphi_{\bu_j^i}\}_{j=0}^{n_{m,
  i}})$.
  Since the finite element function $w_h$ has the decomposition
  $w_h = w_h^{\pi_0} \cdot \chi_0 + w_h^{\pi_1} \cdot \chi_1$,
  we will construct a new function $\hat{w}_{h, i} \in V_{h, i}^m$
  from $w_h^{\pi_i}$ using the following nodal values, 
  \begin{equation}
    \begin{aligned}
      \hat{w}_{h, i}(\bu) &= w_{h}^{\pi_i}(\bu), && \forall \bu \in
      \overline{\Omega}_{h, i} \backslash \overline{\OhG}, \\
      \hat{w}_{h, i}(\bu) &= \frac{1}{2}(w_h^{\pi_0}(\bu) +
      w_h^{\pi_1}(\bu)), && \forall \bu \in
      \overline{\OhG}, 
    \end{aligned}
    \label{eq_newhatw}
  \end{equation}
  We define $w_{h, \bc}:= \hat{w}_{h, 0} \cdot \chi_0 + \hat{w}_{h, 1}
  \cdot \chi_1$. 
  It is straightforward to see that $\hat{w}_{h, 0}|_{\OhG} =
  \hat{w}_{h,1}|_{\OhG}$. 
  By Assumption \ref{as_empty}, we have $w_{h, \bc}|_{\partial \Omega}
  = 0$.  Thus, it follows that $w_{h, \bc} \in V_{h, \bc}^m$.
  It remains to estimate the bound of $w_h - w_{h, \bc}$ as
  \eqref{eq_Vhdiff}. 
  From \eqref{eq_newhatw}, we have that 
  \begin{align*}
    \| w_{h, \bc}^{\pi_i} - w_h^{\pi_i} \|_{L^2(\Ohi)}^2 & \leq
    C \sum_{\bu \in \overline{\OhG}} |w_h^{\pi_i}(\bu) -
    \hat{w}_{h, i}(\bu)|^2 \| \varphi_{\bu} \|_{L^2(\Ohi)}^2 \\
    & \leq C  \sum_{\bu \in \overline{\OhG}} |w_h^{\pi_0}(\bu) -
    w_h^{\pi_1}(\bu)|^2 \| \varphi_{\bu} \|_{L^2(\Ohi)}^2 
  \end{align*}
  For any node $\bu \in \overline{\OhG}$, we let $K_{\bu} \in \MThG$
  be the element with $\bu \in \overline{K}_{\bu}$. 
  We know that $\| \varphi_{\bu} \|_{L^2(\Ohi)} \leq C
  h_{K_{\bu}}^{d/2}$.
  From \eqref{eq_jumpvh} and the inverse estimate, we derive that 
  \begin{align*}
    &|w_h^{\pi_0}(\bu) - w_h^{\pi_1}(\bu)|  \leq \| w_h^{\pi_0} -
    w_h^{\pi_1} \|_{L^{\infty}(\Gamma_{K_{\bu}})} + h_{K_{\bu}} \|
    \nabla (w_h^{\pi_0}(\bu) - w_h^{\pi_1}(\bu))
    \|_{L^{\infty}({K_{\bu}})} \\
    & \leq C (h_{K_{\bu}}^{(1-d)/2} \| \jump{E_{{K_{\bu}}, \tau}^m w_h
    } \|_{L^2(\Gamma_{\Bt{{K_{\bu}}}})} + h_{K_{\bu}}^{1 - d/2}
    \|\nabla w_h^{\pi_0} \|_{L^2({K_{\bu}})} + h_{K_{\bu}}^{1 - d/2}
    \|\nabla w_h^{\pi_1} \|_{L^2({K_{\bu}})}).
  \end{align*}
  Summation over all nodes in $\overline{\OhG}$ leads to
  \begin{displaymath}
    \|  w_{h, \bc}^{\pi_i} - w_h^{\pi_i} \|_{L^2(\Ohi)} \leq C h
    (\shnorm{w_h} + \Vhnorm{w_h}),
  \end{displaymath}
  which brings the estimate \eqref{eq_Vhdiff} and completes the proof.
\end{proof}
As can be seen from \eqref{eq_newhatw}, the newly constructed function
$w_{h, \bc}$ in Lemma \ref{le_Vhdiff} depends linearly on the given
function $w_h$.
This fact enables us to define
a linear operator $\mE_h: V_h^m \rightarrow V_{h,
\bc}^m$ by setting $\mE_h w_h := w_{h, \bc}$ while $w_h - \mE_h w_h$
satisfies the estimate \eqref{eq_Vhdiff}.

We end this section by giving the $H^1$ trace estimate
\cite{Hansbo2002unfittedFEM, Guzman2018infsup} for parts of $\Gamma$. 
\begin{lemma}
  There exists a constant $h_2$ such that for any $h < h_2$, there
  holds 
  \begin{equation}
    \| w \|_{L^2(\Gamma_K)}^2 \leq C ( h_K \| \nabla w \|_{L^2(K)}^2 +
    h_K^{-1} \| w \|_{L^2(K)}^2), \quad \forall w \in H^1(K), \quad
    \forall K \in \MThG.
    \label{eq_H1trace}
  \end{equation}
  \label{le_H1trace}
\end{lemma}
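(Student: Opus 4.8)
The plan is to prove \eqref{eq_H1trace} by the divergence-theorem argument that is standard in the CutFEM literature \cite{Hansbo2002unfittedFEM, Guzman2018infsup}, reducing the trace over the cut piece $\Gamma_K$ to a volume integral over $K$ and a trace integral over the genuine simplex boundary $\partial K$. Concretely, I would work on one side of the interface, say the subdomain $K \cap \Omega_0$, whose boundary consists of $\Gamma_K$ together with portions of the flat faces making up $\partial K$; on $\Gamma_K$ the outward unit normal of $K \cap \Omega_0$ is $\pm \un_{\Gamma}$. The first step is to select a \emph{constant} unit vector $\bm{\beta}$ for which $\bm{\beta} \cdot \un_{\Gamma} \geq c_0 > 0$ holds uniformly on $\Gamma_K$, with $c_0$ independent of $h$ and of how $\Gamma$ cuts $K$.

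Granting such a $\bm{\beta}$, I would apply the divergence theorem to the vector field $w^2 \bm{\beta}$ on $K \cap \Omega_0$ (using that for $w \in H^1(K)$ with $d \leq 3$ one has $w^2 \in W^{1,1}(K)$ by the Sobolev embedding, so the identity extends by a density argument). This yields
\begin{equation*}
  c_0 \int_{\Gamma_K} w^2 \dx{s} \leq \Big| \int_{\Gamma_K} w^2 (\bm{\beta} \cdot \un_{\Gamma}) \dx{s} \Big| \leq 2 \int_K |w|\, |\nabla w| \dx{x} + \int_{\partial K} w^2 \dx{s}.
\end{equation*}
The volume term is controlled by Young's inequality as $2\int_K |w|\,|\nabla w| \dx{x} \leq h_K^{-1} \| w \|_{L^2(K)}^2 + h_K \| \nabla w \|_{L^2(K)}^2$, which already has the target form. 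For the face term I would invoke the classical scaled trace inequality on the shape-regular simplex $K$, namely $\| w \|_{L^2(\partial K)}^2 \leq C ( h_K^{-1} \| w \|_{L^2(K)}^2 + h_K \| \nabla w \|_{L^2(K)}^2 )$, obtained by mapping to the reference simplex and using quasi-uniformity. Summing the two bounds and dividing by $c_0$ gives \eqref{eq_H1trace}.

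The construction of $\bm{\beta}$ is the crux, and it is precisely where the threshold $h_2$ enters. When $\Gamma$ is $C^2$, its normal field is Lipschitz, and since $\Gamma_K \subset \Bt{K}$ has diameter $O(h_K)$, the normal varies by $O(h_K)$ across $\Gamma_K$; taking $\bm{\beta}$ to be $\un_{\Gamma}$ evaluated at one fixed point of $\Gamma_K$ then gives $\bm{\beta} \cdot \un_{\Gamma} \geq 1 - C h_K \geq 1/2$ once $h < h_2$. When $\Gamma$ is polygonal (polyhedral), a single $\bm{\beta}$ need not suffice because $K$ may meet several flat sides $\Gamma_{K, k_j}$ carrying distinct normals; I would instead run the divergence-theorem estimate separately on each flat piece $\Gamma_{K, k_j}$, on which the normal is exactly constant so that $\bm{\beta} = \un_{\Gamma}$ delivers $c_0 = 1$, and then sum over the uniformly bounded number $J_K$ of such pieces. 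The main obstacle is thus this geometric point alone---securing a lower bound $c_0$ independent of the cut configuration---while the remaining steps are routine scaling and Young's-inequality estimates.
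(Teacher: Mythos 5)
Your proposal is correct and coincides with the standard divergence-theorem proof of \cite{Hansbo2002unfittedFEM, Guzman2018infsup}, which is exactly what the paper relies on: Lemma \ref{le_H1trace} is stated there without proof, with a citation to those two references, so you have in effect reconstructed the intended argument (choose a constant $\bm{\beta}$ with $\bm{\beta} \cdot \un_{\Gamma} \geq c_0$ on $\Gamma_K$, integrate $\nabla \cdot (w^2 \bm{\beta})$ over one side of the cut, absorb the volume term by Young's inequality, and control the $\partial K$ contribution by the scaled trace inequality on the shape-regular simplex). One detail to make explicit in the polygonal case: the per-piece divergence theorem should be applied not on $K \cap \Omega_0$, whose boundary contains the \emph{other} flat pieces $\Gamma_{K, k_l}$ with normals of uncontrolled sign relative to your fixed $\bm{\beta}$ (which would make the bound circular), but on the portion of $K$ lying on one side of the hyperplane $P_j$ containing $\Gamma_{k_j}$; since $w^2 \geq 0$, you may enlarge $\Gamma_{K, k_j}$ to the full planar cross-section $P_j \cap K$, and summing over the $O(1)$ pieces $1 \leq j \leq J_K$ then gives \eqref{eq_H1trace} as you describe.
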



\section{Numerical Scheme}
\label{sec_scheme}
In this section, we present the scheme for numerically solving the
interface problem \eqref{eq_problem}. 
The discrete variational problem is defined as: seek $u_h \in
V_h^m$ such that 
\begin{equation}
  a_h(u_h, v_h) =  l_h(v_h), \quad \forall v_h \in V_h^m.
  \label{eq_discreteweak}
\end{equation}
The bilinear form $a_h(\cdot, \cdot)$ is defined as 
\begin{displaymath}
  a_h(v_h, w_h) := b_h(v_h, w_h) + g_h(v_h, w_h) + \mu s_h(v_h, w_h),
  \quad \forall v_h, w_h \in V_h^m,
\end{displaymath}
with $\mu$ being the penalty parameter, and the forms are defined as
follows:
\begin{align*}
  b_h(v_h, w_h) := &\int_{\Omega_0 \cup \Omega_1}  \alpha \nabla v_h
  \cdot \nabla w_h \dbx{x} \\
   - &\int_{\Gamma} \aver{\alpha \nabla v_h}
  \cdot \jump{w_h} \dbx{s}  - \int_{\Gamma} \aver{ \alpha \nabla w_h}
  \cdot \jump{v_h} \dbx{s}, \quad \forall v_h, w_h \in V_h^m,
\end{align*}
\begin{displaymath}
  g_h(v_h, w_h) :=
  g_{h, 0}(v_h^{\pi_0}, w_h^{\pi_0}) + 
  g_{h, 1}(v_h^{\pi_1}, w_h^{\pi_1}), \quad \forall v_h, w_h \in
  V_h^m,
\end{displaymath}
and the form $s_h(\cdot, \cdot)$ is given by \eqref{eq_shform}. 
The linear form $l_h(\cdot)$ is defined as 
\begin{align*}
  l_h(v_h)  := &\int_{\Omega_0 \cup \Omega_1} f v_h \dbx{x} +
  \int_{\Gamma} g_N \aver{v_h} \dbx{s} \\ 
  &- \int_{\Gamma} \aver{\alpha \nabla v_h} \cdot \un_{\Gamma} g_D
  \dbx{s} + \sum_{K \in \MThG} \int_{\Gamma_{\Bt{K}}} g_D \un_{\Gamma}
  \cdot \jump{E_{K, \tau}^m v_h} \dbx{s} , \quad  \forall v_h \in
  V_h^m.
\end{align*}
Since $g_D \in L^2(\Gamma)$, it is well-defined on $\Gamma_{\Bt{K}}$.
\begin{remark}
  We note that for the case where the exact solution is smooth enough,
  i.e. $u \in H^2(\Omega_0 \cup \Omega_1)$, the error estimation is
  standard, see \cite{Hansbo2002unfittedFEM, Gurkan2019stabilized}. 
  In this case, the numerical error on the interface can be directly
  bounded by applying the $H^1$ trace estimate \eqref{eq_H1trace} to
  the term $\| \aver{\alpha \nabla(u - u_h)} \|_{L^2(\Gamma_K)}$. 
  For the case of low regularity, i.e. $s < 1$, the $H^1$ trace
  estimate is unavailable for numerical errors in the interface.
  \label{re_error}
\end{remark}
The error analysis follows the framework in \cite{Gudi2010new},
where the error estimate in the energy norm with the low regularity
assumption was developed for the interior penalty discontinuous
Galerkin methods. 
We introduce the seminorms and the norms as follows:
\begin{displaymath}
  \begin{aligned}
    \Vnorm{v}^2 & := \| \nabla v\|_{L^2(\Omega_0)}^2 + \| \nabla v
    \|_{L^2(\Omega_1)}^2, && \forall v \in H^1(\Omega_0 \cup
    \Omega_1), \\
    \DGenorm{v_h}^2 & := \| \nabla v_h \|_{L^2(\Omega_0)}^2 + \| \nabla
    v_h \|_{L^2(\Omega_1)}^2 + \shnorm{v_h}^2 + \ghnorm{v_h}^2, 
    && \forall v_h \in V_h^m, \\
    \ghnorm{v_h}^2 & := \ghonorm{v_h^{\pi_0}}^2 +
    \ghlnorm{v_h^{\pi_1}}^2, 
    && \forall v_h \in V_h^m.
  \end{aligned}
\end{displaymath}
The seminorm \eqref{eq_Vhnorm} will also be used in the error
estimation. 
By \eqref{eq_ghP1}, we immediately find that 
$\Vhnorm{v_h} \leq C \DGenorm{v_h}$ for any $v_h \in V_h^m$.
We further define a quantity $\sgDnorm{\cdot}$ as 
\begin{displaymath}
  \sgDnorm{v_h}^2 := \sum_{K \in \MThG} h_K^{-1}
  \| g_D \un_{\Gamma} - \jump{E_{K, \tau}^m
  v_h} \|^2_{L^2(\Gamma_{\Bt{K}})}, \quad \forall v_h \in V_h^m.
\end{displaymath}
We first present the coercivity to the bilinear form $a_h(\cdot,
\cdot)$.
\begin{lemma}
  Let $a_h(\cdot, \cdot)$ be defined with a sufficiently large $\mu$,
  then there holds
  \begin{equation}
    a_h(v_h, v_h) \geq C \DGenorm{v_h}^2, \quad \forall v_h \in V_h^m.
    \label{eq_coercivity}
  \end{equation}
  \label{le_coercivity}
\end{lemma}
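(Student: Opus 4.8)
The plan is to evaluate $a_h$ on the diagonal, observe that the only indefinite contribution is the symmetric consistency term on $\Gamma$, and absorb it into the three positive quantities already present. Expanding gives
\begin{equation*}
  a_h(v_h, v_h) = \int_{\Omega_0 \cup \Omega_1} \alpha |\nabla v_h|^2 \dbx{x} - 2\int_{\Gamma} \aver{\alpha \nabla v_h} \cdot \jump{v_h} \dbx{s} + \ghnorm{v_h}^2 + \mu \shnorm{v_h}^2.
\end{equation*}
Writing $\alpha_{\min} := \min(\alpha_0, \alpha_1) > 0$, the volume term is bounded below by $\alpha_{\min} \Vnorm{v_h}^2$, and since $\DGenorm{v_h}^2 = \Vnorm{v_h}^2 + \shnorm{v_h}^2 + \ghnorm{v_h}^2$, it suffices to bound the interface cross term by a small multiple of $\shnorm{v_h}^2$ plus a controllable multiple of $\Vnorm{v_h}^2 + \ghnorm{v_h}^2$. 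I would split this term over $K \in \MThG$, apply Cauchy--Schwarz on each $\Gamma_K$, and insert the scaling $h_K^{\pm 1/2}$ before Young's inequality, so that for any $\epsilon > 0$ the cross term is bounded by $\tfrac{1}{\epsilon}\sum_K h_K \|\aver{\alpha \nabla v_h}\|_{L^2(\Gamma_K)}^2 + \epsilon \sum_K h_K^{-1} \|\jump{v_h}\|_{L^2(\Gamma_K)}^2$.

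For the gradient factor I would use that $\alpha$ is piecewise constant to bound $\|\aver{\alpha \nabla v_h}\|_{L^2(\Gamma_K)}$ by $\|\nabla v_h^{\pi_0}\|_{L^2(\Gamma_K)} + \|\nabla v_h^{\pi_1}\|_{L^2(\Gamma_K)}$, then apply the $H^1$ trace estimate \eqref{eq_H1trace} componentwise to $\nabla v_h^{\pi_i}$ together with an inverse estimate, which is legitimate because $v_h^{\pi_i}|_K$ is a genuine polynomial on the whole cut element $K$. This yields $h_K \|\nabla v_h^{\pi_i}\|_{L^2(\Gamma_K)}^2 \leq C \|\nabla v_h^{\pi_i}\|_{L^2(K)}^2$; summing over $K \in \MThG$ and invoking the extension property \eqref{eq_ghP1} to trade the active-mesh gradient for its physical counterpart plus the ghost seminorm gives $\sum_K h_K \|\aver{\alpha \nabla v_h}\|_{L^2(\Gamma_K)}^2 \leq C(\Vnorm{v_h}^2 + \ghnorm{v_h}^2)$.

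The crucial and most delicate point is the jump factor, where I must match the consistency jump on $\Gamma_K$ with the penalty \eqref{eq_shform} written via the extension on the larger arc $\Gamma_{\Bt{K}}$. The key observation is that for $v_h^{\pi_i}|_K \in \mb{P}_m(K)$ one has $\Pi_K^m v_h^{\pi_i} = v_h^{\pi_i}$, hence $E_{K, \tau}^m v_h^{\pi_i} = v_h^{\pi_i}$ on all of $K \supset \Gamma_K$; consequently $\jump{v_h} = \jump{E_{K, \tau}^m v_h}$ pointwise on $\Gamma_K$, and because $\Gamma_K \subset \Gamma_{\Bt{K}}$ the summed quantity $\sum_K h_K^{-1} \|\jump{v_h}\|_{L^2(\Gamma_K)}^2$ is bounded directly by $\shnorm{v_h}^2$. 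Combining the two bounds yields $a_h(v_h, v_h) \geq (\alpha_{\min} - C/\epsilon) \Vnorm{v_h}^2 + (1 - C/\epsilon) \ghnorm{v_h}^2 + (\mu - \epsilon) \shnorm{v_h}^2$; I would first fix $\epsilon$ large enough that the first two coefficients are bounded below by positive constants, and then take $\mu > \epsilon$, which makes all three coefficients uniformly positive and gives \eqref{eq_coercivity}. The genuine obstacle is precisely the identity $E_{K, \tau}^m v_h^{\pi_i}|_{\Gamma_K} = v_h^{\pi_i}|_{\Gamma_K}$: without it the consistency and penalty terms would be supported on different sets and could not be matched, whereas everything else is routine trace, inverse, and ghost-penalty bookkeeping.
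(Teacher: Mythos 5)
Your proof is correct and takes essentially the same route as the paper's: Cauchy--Schwarz/Young with a free parameter on the interface cross term, the $H^1$ trace estimate \eqref{eq_H1trace} plus an inverse inequality for the average factor (with \tu{P1} converting the active-mesh gradient into $\Vnorm{v_h}^2 + \ghnorm{v_h}^2$), and absorption of the jump factor into $\shnorm{v_h}^2$, followed by choosing the parameter and then $\mu$. The only difference is that you make explicit the identity $E_{K,\tau}^m v_h^{\pi_i}|_{\Gamma_K} = v_h^{\pi_i}|_{\Gamma_K}$ together with $\Gamma_K \subset \Gamma_{\Bt{K}}$, which justifies $\sum_{K \in \MThG} h_K^{-1}\|\jump{v_h}\|_{L^2(\Gamma_K)}^2 \leq \shnorm{v_h}^2$ --- a step the paper's proof uses silently when it passes from the $\Gamma_K$-sums to $\shnorm{v_h}$.
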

\begin{proof}
  By the trace estimate \eqref{eq_H1trace} and the Cauchy-Schwarz
  inequality, we deduce that 
  \begin{align*}
    -2 \int_{\Gamma} &\aver{\nabla v_h} \cdot \jump{v_h} \dbx{s}  =
    \sum_{K \in \MThG} \int_{\Gamma_K} -2 \aver{\nabla v_h} \cdot
    \jump{v_h} \dbx{s} \\
    & \geq \sum_{K \in \MThG} ( - \theta h_K^{-1} \|
    \jump{v_h} \|_{L^2(\Gamma_K)}^2 - \theta^{-1} h_K \| \aver{\nabla
    v_h} \|_{L^2(\Gamma_K)}^2) \\
    & \geq -\theta \shnorm{v_h}^2 - \theta^{-1} C  (\| \nabla
    v_h^{\pi_0} \|_{L^2(\Oho)}^2 + \| \nabla
    v_h^{\pi_1} \|_{L^2(\Ohl)}^2) \geq  -\theta \shnorm{v_h}^2 -
    \theta^{-1} C \DGenorm{v_h}^2,
  \end{align*} 
  for any $\theta > 0$.
  We find that 
  \begin{align*}
    a_h(v_h, v_h) \geq & (C_0 - C_1 \theta^{-1}) \DGenorm{v_h}^2 +
    (\mu - \theta) \shnorm{v_h}^2. 
  \end{align*} 
  By selecting proper $\theta$ and $\mu$, the coercivity
  \eqref{eq_coercivity} is reached, which completes the proof. 
\end{proof}
We next prove a bound between the bilinear form $a_h(\cdot, \cdot)$
and the linear form $l_h(\cdot)$.
\begin{lemma}
  Let $u \in H^{1+s}(\Omega_0 \cup \Omega_1)$ be the solution to
  \eqref{eq_problem}, then there holds 
  \begin{equation}
    \begin{aligned}
      l_h(w_h) - a_h(v_h, w_h) \leq 
      C(\Vnorm{u - v_h} + \ghnorm{v_h} + \sgDnorm{v_h}&)
      \| \nabla w_h \|_{L^2(\Omega)}, \\ 
      &\forall (v_h, w_h) \in V_h^m \times V_{h, \bc}^m.
    \end{aligned}
    \label{eq_consist}
  \end{equation}
  \label{le_consist}
\end{lemma}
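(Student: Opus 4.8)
The plan is to exploit the structure of $V_{h, \bc}^m$ to collapse most of the interface terms, then to replace the volume and Neumann data by the continuous weak form so that the low regularity of $f$ and $g_N$ never enters, and finally to bound the three remaining contributions one by one. The decisive observation is that this route never requires a normal trace of $\nabla u$, which is exactly what fails when $s < 1$.

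First I would use that $w_h \in V_{h, \bc}^m$ satisfies $w_h^{\pi_0}|_{\OhG} = w_h^{\pi_1}|_{\OhG}$, so that $\jump{w_h}|_\Gamma = 0$, $\aver{w_h}|_\Gamma = w_h|_\Gamma$, and $\jump{E_{K, \tau}^m w_h}|_{\Gamma_{\Bt{K}}} = 0$. Consequently $s_h(v_h, w_h) = 0$, the term $\int_\Gamma \aver{\alpha \nabla v_h} \cdot \jump{w_h}$ in $b_h$ drops, and the last sum in $l_h(w_h)$ vanishes. Since $V_{h, \bc}^m \subset H_0^1(\Omega)$, the continuous weak formulation \eqref{eq_weakform} applies with test function $w_h$ and gives $\int_{\Omega_0 \cup \Omega_1} f w_h \dbx{x} + \int_\Gamma g_N w_h \dbx{s} = \int_{\Omega_0 \cup \Omega_1} \alpha \nabla u \cdot \nabla w_h \dbx{x}$. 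Substituting this identity into $l_h(w_h)$ cancels the $f$- and $g_N$-contributions exactly. Using the Dirichlet jump condition $g_D \un_\Gamma = \jump{u}$ from \eqref{eq_problem} to rewrite the remaining data term together with the surviving $\jump{v_h}$-term from $b_h$, I would arrive at
\[
  l_h(w_h) - a_h(v_h, w_h) = \int_{\Omega_0 \cup \Omega_1} \alpha \nabla(u - v_h) \cdot \nabla w_h \dbx{x} - \int_\Gamma \aver{\alpha \nabla w_h} \cdot \jump{u - v_h} \dbx{s} - g_h(v_h, w_h).
\]

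It then remains to estimate the three terms. The volume term is bounded by Cauchy--Schwarz and the boundedness of $\alpha$ by $C \Vnorm{u - v_h} \| \nabla w_h \|_{L^2(\Omega)}$. For the ghost term, Cauchy--Schwarz on $g_h$ gives $|g_h(v_h, w_h)| \leq \ghnorm{v_h} \ghnorm{w_h}$; the upper bound in \tu{P1} \eqref{eq_ghP1} yields $\ghinorm{w_h^{\pi_i}} \leq C \| \nabla w_h^{\pi_i} \|_{L^2(\Ohi)}$, and together with the norm equivalence $\Vhnorm{w_h} \leq 2 \| \nabla w_h \|_{L^2(\Omega)}$ on $V_{h, \bc}^m$ this gives $\ghnorm{w_h} \leq C \| \nabla w_h \|_{L^2(\Omega)}$, so this term is controlled by $C \ghnorm{v_h} \| \nabla w_h \|_{L^2(\Omega)}$.

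The main obstacle is the interface term $\int_\Gamma \aver{\alpha \nabla w_h} \cdot \jump{u - v_h}$, since for $s < 1$ the jump $\jump{u}$ carries no more than $L^2$ regularity and cannot be handled through an $H^1$ trace of $\nabla u$. Here I would use that $E_{K, \tau}^m v_h = v_h$ on $K$ (the extension reproduces polynomials), so that on $\Gamma_K$ one has $\jump{u - v_h} = g_D \un_\Gamma - \jump{E_{K, \tau}^m v_h}$; enlarging $\Gamma_K$ to $\Gamma_{\Bt{K}}$ then produces precisely the quantity measured by $\sgDnorm{\cdot}$. On the other factor, applying the trace estimate \eqref{eq_H1trace} to $\nabla w_h$ (well defined on $K \in \MThG$ because the two pieces of $w_h$ coincide there) together with an inverse estimate gives $\| \aver{\alpha \nabla w_h} \|_{L^2(\Gamma_K)} \leq C h_K^{-1/2} \| \nabla w_h \|_{L^2(K)}$. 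A Cauchy--Schwarz over the cut elements $K \in \MThG$ then bounds this term by $C \sgDnorm{v_h} \| \nabla w_h \|_{L^2(\Omega)}$, and collecting the three estimates completes the proof.
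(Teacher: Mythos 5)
Your proof is correct and follows essentially the same route as the paper's: you use $w_h \in V_{h, \bc}^m$ to annihilate $s_h(v_h, w_h)$ and the jump terms, substitute the continuous weak form $l(w_h) = a(u, w_h)$ so that $f$ and $g_N$ cancel exactly, and bound the interface term by combining the reproduction property $E_{K, \tau}^m v_h = v_h$ on $K$, the enlargement $\Gamma_K \subset \Gamma_{\Bt{K}}$, and the trace estimate \eqref{eq_H1trace} with an inverse inequality applied to $\nabla w_h$. The only cosmetic difference is that you write the interface term as $-\int_\Gamma \aver{\alpha \nabla w_h} \cdot \jump{u - v_h}\,\mathrm{d}s$ before inserting $\jump{u} = g_D \un_{\Gamma}$, whereas the paper works directly with $\jump{v_h} - g_D \un_{\Gamma}$; these are identical.
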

\begin{proof}
  Since $w_h \in V_{h, \bc}^m$, we know that $\jump{E_{K, \tau}^m
  w_h}|_{\Gamma_{\Bt{K}}} = \bm{0}$ for any $K \in \MThG$, which
  immediately implies $s_h(v_h, w_h) = 0$.
  From $w_h \in H_0^1(\Omega)$, we find that
  \begin{align*}
    l_h(w_h) & = l(w_h) - \int_{\Gamma} \aver{\alpha \nabla w_h} \cdot
    \un_{\Gamma} g_D
    \dbx{s} = a(u, w_h) - \int_{\Gamma} \aver{\alpha \nabla v_h} \cdot
    \un_{\Gamma}  g_D \dbx{s}.
  \end{align*}
  By the direct calculation, we have that
  \begin{align*}
    l_h(w_h) - a_h(v_h, w_h) = 
    & \int_{\Omega_0 \cup \Omega_1} \alpha
    \nabla (u - v_h) \cdot \nabla w_h \dbx{x}\\
    + &\int_{\Gamma} \aver{\alpha \nabla w_h} \cdot (\jump{v_h} - g_D
    \un_{\Gamma} ) \dbx{s} - g_h(v_h, w_h).
  \end{align*}
  Applying the Cauchy-Schwarz inequality brings 
  \begin{align*}
    \int_{\Omega_0 \cup \Omega_1} \alpha \nabla (u - v_h) \cdot \nabla
    w_h \dbx{x} - g_h(v_h, w_h) & \leq C \Vnorm{u - v_h} \Vnorm{w_h}
    + \ghnorm{v_h}\ghnorm{w_h}  \\
    & \leq C (\Vnorm{u - v_h} + \ghnorm{v_h})
    \| \nabla w_h \|_{L^2(\Omega)}.
  \end{align*}
  From the trace estimate \eqref{eq_H1trace}, we deduce that 
  \begin{align*}
    &\int_{\Gamma} \aver{\alpha \nabla w_h} \cdot (\jump{v_h} - g_D
    \un_{\Gamma})
    \dbx{s}  = \sum_{K \in \MThG} \int_{\Gamma_K}  \aver{\alpha
    \nabla w_h} \cdot (\jump{v_h} - g_D \un_{\Gamma}) \dbx{s} \\
    \leq &C \Big( \sum_{K \in \MThG}  h_K \| \aver{\nabla w_h}
    \|_{L^2(\Gamma_K)}^2 \Big)^{1/2} \Big( \sum_{K \in \MThG}
    h_K^{-1}  \| \jump{v_h} - g_D \un_{\Gamma} \|_{L^2(\Gamma_K)}^2
    \Big)^{1/2} \\
    \leq &C \Big( \sum_{K \in \MThG} \| \nabla w_h^{\pi_0}
    \|_{L^2(K)}^2 + \| \nabla w_h^{\pi_1} \|_{L^2(K)}^2
    \Big)^{1/2}\Big( \sum_{K \in \MThG} h_K^{-1} \| \jump{E_{
    K, \tau}^m v_h} - g_D \un_{\Gamma} \|_{L^2(\Gamma_{\Bt{K}})}^2
    \Big)^{1/2} \\ \leq &C \| \nabla w_h \|_{L^2(\Omega)}
    \sgDnorm{v_h}.
  \end{align*}
  Combining all above estimates leads to \eqref{eq_consist}, which
  completes the proof.
\end{proof}
We are ready to give the error estimate for the numerical solution, by
combining Lemma \ref{le_Vhdiff}, Lemma \ref{le_coercivity} and Lemma
\ref{le_consist}.
\begin{theorem}
  Let $a_h(\cdot, \cdot)$ be defined with sufficiently large $\mu$,
  and let $u_h \in V_h^m$ be the numerical solution of
  \eqref{eq_discreteweak}, and let $u \in H^{1+s}(\Omega_0 \cup
  \Omega_1)$ be the exact solution of \eqref{eq_problem}, then there
  holds
  \begin{equation}
    \DGenorm{u - u_h} \leq C h^t \| u \|_{H^{s+1}(\Omega_0 \cup
    \Omega_1)}, \quad t = \min(s, m).
    \label{eq_DGerror}
  \end{equation}
  \label{th_DGerror}
\end{theorem}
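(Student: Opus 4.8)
The plan is to follow the standard Strang-type / medius-analysis framework of \cite{Gudi2010new}, feeding it the three technical lemmas just established. The key idea is that the discrete solution $u_h$ lives in $V_h^m$, but $u_h$ is \emph{not} in $H_0^1(\Omega)$ because its two components need not match across $\OhG$; the consistency estimate \eqref{eq_consist} only controls $l_h - a_h$ when tested against a genuinely conforming function $w_h \in V_{h,\bc}^m$. The bridge between the two is precisely the linear operator $\mE_h : V_h^m \to V_{h,\bc}^m$ constructed from Lemma \ref{le_Vhdiff}. So the skeleton is: coercivity (Lemma \ref{le_coercivity}) to turn $\DGenorm{u_h - \Pi u}$ into an energy expression, then Galerkin orthogonality plus the consistency bound \eqref{eq_consist} applied to the conforming image $\mE_h(u_h - \Pi u)$, with the error $u_h - \Pi u - \mE_h(u_h - \Pi u)$ absorbed through \eqref{eq_Vhdiff}.

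\textbf{Main steps.} First I would fix the interpolant $\Pi u := \Pi_{V_h^m} u$ (the componentwise Scott--Zhang interpolation of the two extensions $u^0,u^1$) and split $\DGenorm{u - u_h} \le \DGenorm{u - \Pi u} + \DGenorm{\Pi u - u_h}$, so that it suffices to bound the discrete part $\xi_h := \Pi u - u_h \in V_h^m$. Second, apply coercivity \eqref{eq_coercivity} to get $\DGenorm{\xi_h}^2 \le C\, a_h(\xi_h, \xi_h)$ and expand using the scheme $a_h(u_h, v_h) = l_h(v_h)$, giving $a_h(\xi_h,\xi_h) = a_h(\Pi u, \xi_h) - l_h(\xi_h)$. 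Third — the crucial maneuver — set $w_h := \mE_h \xi_h \in V_{h,\bc}^m$ and write $a_h(\xi_h,\xi_h) = \big(a_h(\Pi u, \xi_h) - l_h(\xi_h)\big)$, then decompose $\xi_h = w_h + (\xi_h - w_h)$ in the second slot. On the conforming part $w_h$ I would invoke Lemma \ref{le_consist} to bound $l_h(w_h) - a_h(\Pi u, w_h) \le C(\Vnorm{u - \Pi u} + \ghnorm{\Pi u} + \sgDnorm{\Pi u})\,\|\nabla w_h\|_{L^2(\Omega)}$, using $\|\nabla w_h\|_{L^2(\Omega)} \le \Vhnorm{w_h} \le C\DGenorm{\xi_h}$ from the norm equivalence on $V_{h,\bc}^m$. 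On the nonconforming remainder $\xi_h - w_h$ I would bound the various pieces of $a_h(\Pi u, \xi_h - w_h) - l_h(\xi_h - w_h)$ by Cauchy--Schwarz, controlling $\|\nabla(\xi_h - w_h)\|$, $\|\xi_h - w_h\|_{L^2}$, and the interface/jump contributions through \eqref{eq_Vhdiff}, whose right-hand side is $Ch(\shnorm{\xi_h} + \Vhnorm{\xi_h}) \le C\DGenorm{\xi_h}$. Fourth, collect everything into $\DGenorm{\xi_h}^2 \le C(\Vnorm{u-\Pi u} + \ghnorm{\Pi u} + \sgDnorm{\Pi u} + \DGenorm{\xi_h})\,\DGenorm{\xi_h}$ and cancel one power of $\DGenorm{\xi_h}$.

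\textbf{Finishing with approximation estimates.} The remaining task is to show each data term is $O(h^t)$ with $t = \min(s,m)$. The interpolation error $\Vnorm{u - \Pi u}$ and $\DGenorm{u - \Pi u}$ follow from fractional-order Scott--Zhang estimates on the extended functions $u^i \in H^{1+s}(\Omega)$, using the hypothesis $\|u^i\|_{H^{1+s}(\Omega)} \le C\|u\|_{H^{1+s}(\Omega_i)}$. The ghost-penalty term $\ghnorm{\Pi u}$ is exactly property \tu{P2}/\eqref{eq_ghP2}. The genuinely low-regularity piece is $\sgDnorm{\Pi u}$: here I must estimate $h_K^{-1}\|g_D\un_\Gamma - \jump{E_{K,\tau}^m \Pi u}\|_{L^2(\Gamma_{\Bt{K}})}^2$, replacing $g_D = \jump{u}$, inserting $\jump{u}$ and using the inverse-type interface estimates \eqref{eq_C2inverse}/\eqref{eq_polyinverse} together with $|\Gamma_{\Bt{K}}| \ge Ch_K^{d-1}$ to trade the surface norm for the polynomial extension's $L^2$/gradient norms, then invoking a fractional trace/approximation bound for $u^i - \Pi u^i$.

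\textbf{Main obstacle.} I expect the decisive difficulty to be the $\sgDnorm{\Pi u}$ estimate, because it is the only place where the $s < 1$ regularity genuinely bites: a naive $H^1$ trace bound on $\nabla(u - \Pi u)|_\Gamma$ is unavailable (as \cite{Burman2024low} and Remark \ref{re_error} stress), so I must route the jump of the \emph{polynomial extension} $E_{K,\tau}^m$ through the inverse estimate \eqref{eq_jumpvh} rather than through $\nabla u$ directly. Correctly matching the powers of $h_K$ — so that the $(1-d)/2$ and $1/2$ exponents in \eqref{eq_C2inverse} combine with the $h_K^{-1}$ weight in $\sgDnorm{\cdot}$ to yield clean $h^{2t}$ after summation over cut elements — is the delicate bookkeeping that makes the low-regularity estimate work, and is where I would concentrate the most care.
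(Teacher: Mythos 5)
Your skeleton is exactly the paper's: the componentwise Scott--Zhang interpolant, coercivity \eqref{eq_coercivity} plus Galerkin orthogonality, the splitting of the test function through $\mE_h$ into a conforming part handled by Lemma \ref{le_consist} and a remainder $z_h$ controlled by \eqref{eq_Vhdiff}, and the three data estimates $\Vnorm{u-\Pi u}$, $\ghnorm{\Pi u}$, $\sgDnorm{\Pi u}$. The genuine gap is the step where you dispose of the nonconforming remainder ``by Cauchy--Schwarz,'' and it is precisely there --- not in $\sgDnorm{\Pi u}$ --- that the low regularity bites. Plain Cauchy--Schwarz on $l_h(z_h)-a_h(v_h,z_h)$ fails: for $s<1$ the pairing $(f,z_h)$ is not an $L^2$ pairing at all ($f\in H^{s-1}$ only), and bounding the volume term $\int \alpha\nabla v_h\cdot\nabla z_h\,\dbx{x}$ against $\|\nabla z_h\|_{L^2}\leq C\DGenorm{w_h}$ yields an $O(1)$ contribution with no power of $h$. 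The paper instead carries out the medius-analysis elementwise integration by parts, splitting the remainder into the five terms $\tRomannum{1},\ldots,\tRomannum{5}$: a volume term bounded for $s<1$ by the duality $\|f\|_{H^{s-1}(\Omega_0\cup\Omega_1)}\|z_h\|_{H^{1-s}(\Omega_0\cup\Omega_1)}\leq Ch^{s}\DGenorm{w_h}$ (interpolating the $L^2$ and $H^1$ bounds in \eqref{eq_appzh}), an interface term, the two penalty terms, and the interior face-jump term $\jump{\nabla_{\un}v_h}$ bounded via a patchwise polynomial $p_f\in\mb{P}_m(S_f)$. Two enabling devices are missing from your plan: the paper switches to the \emph{linear} Scott--Zhang interpolant when $s<1$, so that $\Delta v_h=0$ elementwise and no residual term obstructs the duality step (with your fixed degree-$m$ interpolant you would need an extra Bramble--Hilbert argument, e.g. $\|\Delta v_h\|_{L^2(K)}\leq Ch_K^{s-1}\|u\|_{H^{1+s}}$ by inserting an affine function, which you never mention); and it uses $\aver{z_h}|_{\Gamma}=0$, built into the construction \eqref{eq_newhatw}, to annihilate the $\int_\Gamma g_N\aver{z_h}\,\dbx{s}$ term, without which $l_h(z_h)$ contains a piece your estimates do not reach.

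A secondary misattribution: you flag $\sgDnorm{\Pi u}$ as the decisive difficulty and propose to attack it with the inverse estimates \eqref{eq_C2inverse}/\eqref{eq_polyinverse}. In the paper those inverse estimates serve a different purpose --- they yield \eqref{eq_jumpvh} and hence Lemma \ref{le_Vhdiff}, i.e. the very existence of the operator $\mE_h$ your proof relies on --- whereas \eqref{eq_sgDnormbound} is obtained quite directly by inserting a local best approximation $v_{\Bt{K}}^i\in\mb{P}_m(\Bt{K})$ and using only the boundedness \eqref{eq_EKL2} of $E_{K,\tau}^m$ together with the trace estimate \eqref{eq_H1trace}; no delicate exponent bookkeeping on $\Gamma_{\Bt{K}}$ is required there. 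So the architecture of your proposal is right, but the heart of the low-regularity argument (the five-term decomposition with the negative-norm duality and the interpolant degree switch) is absent and cannot be replaced by the Cauchy--Schwarz step as written.
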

\begin{proof}
  For $s < 1$, we let $v_{h, i}$ be the linear Scott-Zhang interpolant
  of the extension $u^i$ into the space $V_{h, i}^m$, while for $s
  \geq 1$ we let $v_{h, i}$ be the Scott-Zhang interpolant of degree
  $m$ into the space $V_{h, i}^m$.
  We set $v_h := v_{h, 0} \cdot \chi_0 + v_{h, 1} \cdot \chi_1$ as the
  interpolant of $u$ into the space $V_h^m$.
  From the approximation property of $v_{h, i}$, there holds
  \begin{equation}
    \begin{aligned}
      h \| u - &v_h \|_{L^2(\Omega)} + \Vnorm{u - v_h}  \\
      & \leq \sum_{i = 0, 1} \big( h \| u^i - v_{h, i} \|_{L^2(\Ohi)}
      + |\nabla(u^i - v_{h, i}) |_{L^2(\Ohi)} \big) \leq C h^t \| u
      \|_{H^{s+1}(\Omega_0 \cup \Omega_1)}.
    \end{aligned}
    \label{eq_SZapp}
  \end{equation}
  We first present the approximation estimates of $v_h$ under the
  error measurements. 
  The property \eqref{eq_ghP2} directly implies
  \begin{equation}
    \ghnorm{v_h} \leq C h^t \| u \|_{H^{s+1}(\Omega_0 \cup \Omega_1)}.
    \label{eq_ghvhbound}
  \end{equation}
  For any cut $K \in \MThG$, we let $v_{\Bt{K}}^i \in
  \mb{P}_m(\Bt{K})$ satisfying $\|u^i - v_{\Bt{K}}^i \|_{L^2(\Bt{K})}
  \leq C h^t \| u^i \|_{H^{s+1}(\Bt{K})}$. 
  From \eqref{eq_EKL2} and the trace estimate \eqref{eq_H1trace},
  we obtain 
  \begin{align*}
    h_K^{-1/2} &\| E_{K, \tau}^m v_{h, i}  - u^i
    \|_{L^2(\Gamma_{\Bt{K}})}  \\
    & \leq h_K^{-1/2} ( \| E_{K, \tau}^m v_{h,
    i} - v_{\Bt{K}}^i 
    \|_{L^2(\Gamma_{\Bt{K}})} + \| u^i -  v_{\Bt{K}}^i 
    \|_{L^2(\Gamma_{\Bt{K}})}) \\
    &\leq C h_K^{-1/2} \| v_{h, i} - v_{\Bt{K}}^i
    \|_{L^2(K)} + C h^t \|u^i \|_{H^{s+1}(\Bt{K})} \\
    &\leq C h_K^{-1/2} ( \| v_{h, i} - u^i \|_{L^2(K)} + \|u^i -
    v_{\Bt{K}}^i \|_{L^2(K)} )+ Ch^t \|u^i \|_{H^{s+1}(\Bt{K})}
    \\
    & \leq C h^t \|u^i \|_{H^{s+1}(\Bt{K})}. 
  \end{align*}
  Summation over all cut elements brings us that 
  \begin{equation}
    \sgDnorm{v_h} \leq Ch^t \| u \|_{H^{1+s}(\Omega_0 \cup \Omega_1)}.
    \label{eq_sgDnormbound}
  \end{equation}

  We now proceed to prove the error estimate \eqref{eq_DGerror}.
  Define $w_h := u_h - v_h$. 
  From the coercivity \eqref{eq_coercivity}, we have that 
  \begin{align*}
    C\DGenorm{u_h - v_h}^2 & \leq a_h(u_h - v_h, w_h) =
    l_h(w_h) - a_h(v_h, w_h) \\
    & = l_h(\mE_h w_h) + l_h(w_h - \mE_h w_h) -
    a_h(v_h, \mE_h w_h) - a_h(v_h, w_h - \mE_h w_h).
  \end{align*}
  By the estimate \eqref{eq_Vhdiff}, we know that $\|\nabla \mE_h
  w_h\|_{L^2(\Omega)} \leq C \DGenorm{w_h}$.
  Combining \eqref{eq_consist} and the estimates \eqref{eq_SZapp} -
  \eqref{eq_sgDnormbound} yields
  \begin{align*}
    l_h(\mE_h w_h) - a(v_h, \mE_h w_h) & \leq C (\Vnorm{u - v_h} +
    \ghnorm{v_h} + \sgDnorm{v_h}) \| \nabla \mE_h w_h
    \|_{L^2(\Omega)} \\
    & \leq C h^t \|u \|_{H^{s+1}(\Omega_0 \cup \Omega_1)}
    \DGenorm{w_h}.
  \end{align*}
  Let $z_h := w_h - \mE_h w_h$, it remains to bound the term
  $l_h(z_h) - a_h(v_h, z_h)$.
  Again by \eqref{eq_Vhdiff}, it follows that
  \begin{equation}
    \| z_h^{\pi_0} \|_{L^2(\Oho)} + h \| \nabla z_h^{\pi_0}
    \|_{L^2(\Oho)} + 
    \| z_h^{\pi_1} \|_{L^2(\Ohl)} + h \| \nabla z_h^{\pi_1}
    \|_{L^2(\Ohl)} \leq C h \DGenorm{w_h}. 
    \label{eq_appzh}
  \end{equation}
  We note that $\aver{z_h}|_{\Gamma} = 0$ from the construction given
  in \eqref{eq_newhatw}. 
  By applying integration by parts, the term $l_h(z_h) - a_h(v_h,
  z_h)$ can be split into the following components:
  \begin{displaymath}
    l_h(z_h) - a_h(v_h, z_h) = 
    \tRomannum{1} + \tRomannum{2} + \tRomannum{3} + \tRomannum{4} +
    \tRomannum{5},
  \end{displaymath}
  where
  \begin{align*}
    \tRomannum{1} & := \left\{
    \begin{aligned}
      & (f, z_h)_{L^2(\Omega_0)} + (f, z_h)_{L^2(\Omega_1)}, &&
      s < 1, \\
      & \sum_{K \in \MTho} \int_{K^0} (f + \Delta v_h) z_h \dbx{x}
      + \sum_{K \in \MThl} \int_{K^1} (f + \Delta v_h) z_h \dbx{x}
      , && s \geq 1,
    \end{aligned}
    \right. \\
    \tRomannum{2} &= \int_{\Gamma}
    \aver{\alpha \nabla z_h} \cdot (g_D \un_{\Gamma} - \jump{v_h})
    \dbx{s}, \qquad \tRomannum{3} := g_h(v_h, z_h)  \\
    \tRomannum{4} & := \mu \sum_{K \in \MThG} \int_{\Gamma_{\Bt{K}}}
    h_K^{-1} (\jump{E_{K, \tau}^m v_h} - g_D \un_{\Gamma} )\cdot
    \jump{E_{K, \tau}^m z_h} \dbx{s}, \quad \\
    \tRomannum{5} & := \sum_{f \in \MFho} \int_{f^0} \alpha
    \jump{\nabla_{\un} v_h} z_h \dbx{s} - \sum_{f \in \MFhl}
    \int_{f^1} \alpha \jump{\nabla_{\un} v_h} z_h \dbx{s}.
  \end{align*}
  The estimates for each of the terms
  $\tRomannum{1}, \ldots, \tRomannum{5}$ are derived below.
  For $s < 1$, we have that 
  \begin{align*}
    \tRomannum{1} = (f, z_h )_{L^2(\Omega_0)} + (f,
    z_h )_{L^2(\Omega_1)} & \leq C \|f \|_{H^{s-1}(\Omega_0 \cup
    \Omega_1)} \| z_h \|_{H^{1-s}(\Omega_0 \cup \Omega_1)} \\
    & \leq Ch^{s} \|u \|_{H^{s+1}(\Omega_0 \cup \Omega_1)}
    \DGenorm{w_h},
  \end{align*}
  while for $s \geq 1$, we get 
  \begin{align*}
    \tRomannum{1} & =  \sum_{K \in \MTho} \int_{K^0} (f
    + \Delta v_h) z_h \dbx{x} +  \sum_{K \in \MThl} \int_{K^1}
    (f + \Delta v_h) z_h \dbx{x} \\
    & \leq \sum_{K \in \MTho}\| \Delta(u -
    v_h) \|_{L^2(K^0)} \| z_h\|_{L^2(K^0)} + \sum_{K \in
    \MThl}\| \Delta(u - v_h) \|_{L^2(K^1)} \| z_h\|_{L^2(K^1)}
    \\
    & \leq Ch^t \|u \|_{H^{s+1}(\Omega_0 \cup \Omega_1)}
    \DGenorm{w_h}.
  \end{align*}
  We apply the $H^1$ trace estimate \eqref{eq_H1trace} to bound the
  second term $\tRomannum{2}$, and it follows that
  \begin{align*}
    \tRomannum{2} &= \int_{\Gamma}  \jump{u - v_h} \cdot
    \aver{\alpha \nabla z_h} \dbx{s} = \sum_{K \in \MTh}
    \int_{\Gamma_K} \jump{u - v_h} \cdot \aver{\alpha \nabla z_h}
    \dbx{s} \\
    & \leq C \big(  \sum_{K \in \MTh} h_K^{-1} \|
    \jump{u - v_h}\|_{L^2(\Gamma_K)}^2 \big)^{1/2} \big( \sum_{K \in
    \MTh} h_K \| \aver{\alpha \nabla z_h} \|_{L^2(\Gamma_K)}^2
    \big)^{1/2} \\
    & \leq C h^t \| u \|_{H^{s+1}(\Omega_0 \cup \Omega_1)}( \|\nabla
    z_h^{\pi_0} \|_{L^2(\Oho)} +  \|\nabla z_h^{\pi_1} \|_{L^2(\Ohl)}) 
    \leq  C h^t \| u \|_{H^{s+1}(\Omega_0 \cup \Omega_1)}
    \DGenorm{w_h}. 
  \end{align*}
  By \eqref{eq_ghP1} and \eqref{eq_ghvhbound}, we observe that 
  \begin{align*}
    \tRomannum{3} \leq \ghnorm{v_h} \ghnorm{z_h} & \leq C h^t \| u
    \|_{H^{s+1}(\Omega_0 \cup \Omega_1)} ( 
    \|\nabla z_h^{\pi_0} \|_{L^2(\Oho)} +  
    \|\nabla z_h^{\pi_1} \|_{L^2(\Ohl)}) \\
    & \leq C h^t \| u \|_{H^{s+1}(\Omega_0 \cup \Omega_1)}
    \DGenorm{w_h}. 
  \end{align*}
  From \eqref{eq_sgDnormbound} and \eqref{eq_EKL2}, we have that 
  \begin{align*}
    \tRomannum{4} & \leq C \sgDnorm{v_h} \big( \sum_{K \in \MThG}
    h_K^{-1} \| \jump{ E_{K, \tau}^m z_h} \|_{L^2(\Gamma_{\Bt{K}})}^2
    \big)^{1/2} \\
    & \leq  C \sgDnorm{v_h} ( h^{-2} \| z_h^{\pi_0}
    \|_{L^2(\Oho)}^2  +  h^{-2} \| z_h^{\pi_1}
    \|_{L^2(\Ohl)}^2)^{1/2} \\
    & \leq C h^t \| u \|_{H^{s+1}(\Omega_0 \cup \Omega_1)}
    \DGenorm{w_h}. 
  \end{align*} 
  To bound the last term $\tRomannum{5}$, for any $f \in \MFhi$ we let
  $K_{f, -}, K_{f, +} \in \MThi$ be the two adjacent elements sharing
  the common face $f$. 
  Let $S_f := \text{Int}(\overline{K}_{f, -} \cup \overline{K}_{f,
  +})$, and then there exists $p_f \in \mb{P}_m(S_f)$ such that 
  $\|\nabla (u^i - p_f) \|_{L^2(S_f)} \leq C h^t \| u^i
  \|_{H^{s+1}(S_f)}$.
  Let $v_{K, \pm} := v_h|_{K_{f, \pm}}$, and we further extend
  $v_{K, \pm}$ to $S_f$ by the canonical extension. 
  We deduce that 
  \begin{align*}
    &\int_{f^i} \alpha \jump{\nabla_{\un} v_h} z_h \dbx{s}  =
    \int_{f^i} \alpha \jump{\nabla_{\un} (v_h - p_f)} z_h \dbx{s} \\
    & \leq C h_f^{-1} ( \| \nabla (v_{K, -}  - p_f) \|_{L^2(K_{f,
    -})}^2 + \| \nabla (v_{K, +}  - p_f) \|_{L^2(K_{f,
    +})}^2)^{1/2} \| z_h \|_{L^2(S_f)} \\
    & \leq C h_f^{-1} (  \| \nabla (v_{K, -}  - u^i) \|_{L^2(K_{f,
    -})}^2 +  \| \nabla (v_{K, +}  - u^i) \|_{L^2(K_{f,
    +})}^2 +  \| \nabla (u^i - p_f) \|_{L^2(S_f)}^2 )^{1/2}  \| z_h
    \|_{L^2(S_f)} \\
    & \leq Ch^{t-1} \| u^i \|_{H^{s+1}(S_f)} \|z_h  \|_{L^2(S_f)}. 
  \end{align*}
  Then, there holds 
  \begin{align*}
    \tRomannum{5} & \leq Ch^{t}   \| u \|_{H^{s+1}(\Omega_0 \cup
    \Omega_1)} ( h^{-1} \| z_h^{\pi_0} \|_{L^2(\Oho)}  +  h^{-1} \|
    z_h^{\pi_1} \|_{L^2(\Ohl)})\\
    &\leq  Ch^{t}   \| u
    \|_{H^{s+1}(\Omega_0 \cup \Omega_1)} \DGenorm{w_h}.
  \end{align*}
  Combining all estimates of $\tRomannum{1}, \ldots, \tRomannum{5}$ 
  yields  
  \begin{equation}
    C \DGenorm{u_h - v_h} \leq C h^t \| u \|_{H^{s+1}(\Omega_0 \cup
    \Omega_1)},
    \label{eq_enormuhvh}
  \end{equation}
  and the desired error estimate \eqref{eq_DGerror} thus follows from
  the triangle inequality, 
  which completes the proof.
\end{proof}
For any $w_h \in V_h^m$, we let $z_h := \mE_h w_h$. Since $z_h \in
H_0^1(\Omega)$,  there holds $\| z_h \|_{L^2(\Omega)} \leq C \| \nabla z_h
\|_{L^2(\Omega)}$ by the Poincar\'{e} inequality. 
For both $i = 0, 1$, we apply \eqref{eq_Vhdiff} to derive that
\begin{align*}
  \| &w_h^{\pi_i} \|_{L^2(\Ohi)}  \leq \| w_h^{\pi_i} - z_h^{\pi_i}
  \|_{L^2(\Ohi)} + \|z_h^{\pi_i} \|_{L^2(\Ohi)}\\
  & \leq \DGenorm{w_h} +
  \| z_h \|_{L^2(\Omega)}  \leq \DGenorm{w_h} + C \| \nabla z_h
  \|_{L^2(\Omega)} \\
  & \leq C (\DGenorm{w_h} + \| \nabla (z_h^{\pi_0} - w_h^{\pi_0})
  \|_{L^2(\Oho)} + \| \nabla (z_h^{\pi_1} - w_h^{\pi_1})
  \|_{L^2(\Ohl)} ) \leq C \DGenorm{w_h}.
\end{align*}
We conclude that the energy norm $\DGenorm{\cdot}$ is stronger than
the $L^2$ norm on $V_h^m$,
\begin{equation}
  \| w_h^{\pi_0} \|_{L^2(\Oho)} + 
  \| w_h^{\pi_1} \|_{L^2(\Ohl)} \leq C \DGenorm{w_h}, \quad \forall
  w_h \in V_h^m.
  \label{eq_L2bound}
\end{equation}
Combining \eqref{eq_L2bound} with \eqref{eq_enormuhvh}, it can be
observed that the convergence rate under the $L^2$ norm is at least
suboptimal, i.e.
\begin{equation}
  \|u - u_h\|_{L^2(\Omega_0 \cup \Omega_1)} \leq C h^t \| u
  \|_{H^{s+1}(\Omega_0 \cup \Omega_1)}.
  \label{eq_L2error}
\end{equation}

Ultimately, we present an estimate of the condition number for the
sparse matrix arising from the discrete system \eqref{eq_discreteweak},
which is desirable in unfitted methods.
\begin{theorem}
  Let $a_h(\cdot, \cdot)$ be defined with a sufficiently large $\mu$, 
  and let $A$ be the matrix arising from $a_h(\cdot, \cdot)$, there
  holds
  \begin{equation}
    \kappa(A) \leq C h^{-2}.
    \label{eq_kappaA}
  \end{equation}
  \label{th_kappaA}
\end{theorem}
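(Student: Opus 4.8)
The plan is to use the fact that $A$ is symmetric and, by the coercivity of Lemma~\ref{le_coercivity}, positive definite, so that $\kappa(A) = \lambda_{\max}(A)/\lambda_{\min}(A)$ with both eigenvalues positive. (Each of $b_h$, $g_h$ and $s_h$ is manifestly symmetric.) Writing any $v_h \in V_h^m$ in the Lagrange nodal basis of $V_{h,0}^m$ and $V_{h,1}^m$ with coefficient vector $\bm{V} = (\bm{V}_0, \bm{V}_1)$, so that $\bm{V}^{\mathrm T} A \bm{V} = a_h(v_h, v_h)$, I would first record the one mesh-dependent ingredient that makes the whole argument cut-robust: because the active meshes $\MThi$ are quasi-uniform and shape-regular and the nodal mass matrix scales like $h^d$ on full-size elements, there holds
\begin{displaymath}
  C_1 h^d |\bm{V}|^2 \leq \| v_h^{\pi_0} \|_{L^2(\Oho)}^2 + \| v_h^{\pi_1} \|_{L^2(\Ohl)}^2 \leq C_2 h^d |\bm{V}|^2,
\end{displaymath}
with constants depending only on shape-regularity, $m$ and $d$, and crucially \emph{not} on how $\Gamma$ cuts $\MTh$. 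Note that these $L^2$ norms are taken over the entire active-mesh domains $\Oho,\Ohl$, where every element is full-size; the small cuts never enter here.

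For the lower eigenvalue bound I would chain the coercivity \eqref{eq_coercivity} with the $L^2$-stability \eqref{eq_L2bound}: for every $v_h \in V_h^m$,
\begin{displaymath}
  \bm{V}^{\mathrm T} A \bm{V} = a_h(v_h, v_h) \geq C \DGenorm{v_h}^2 \geq C \big( \| v_h^{\pi_0} \|_{L^2(\Oho)}^2 + \| v_h^{\pi_1} \|_{L^2(\Ohl)}^2 \big) \geq C h^d |\bm{V}|^2,
\end{displaymath}
so $\lambda_{\min}(A) \geq C h^d$. For the upper bound I would establish the continuity estimate $a_h(v_h, v_h) \leq C \DGenorm{v_h}^2$ — the Nitsche cross term of $b_h$ is controlled exactly as in the proof of Lemma~\ref{le_coercivity} via \eqref{eq_jumpvh} and the trace estimate \eqref{eq_H1trace}, while $g_h(v_h,v_h) = \ghnorm{v_h}^2$ and $s_h(v_h,v_h) = \shnorm{v_h}^2$ are already summands of $\DGenorm{\cdot}^2$ — and then bound $\DGenorm{v_h}^2$ by inverse inequalities. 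The gradient and ghost-penalty terms give $\|\nabla v_h^{\pi_i}\|_{L^2(\Ohi)}^2 + \ghinorm{v_h^{\pi_i}}^2 \leq C h^{-2} \|v_h^{\pi_i}\|_{L^2(\Ohi)}^2$ via the standard inverse estimate on the active mesh (using the construction bound for $\ghinorm{\cdot}$). For $\shnorm{v_h}^2$ I would use that $E_{K,\tau}^m v_h^{\pi_i}$ is a polynomial on $\Bt{K}$ together with the inverse trace bound $\|p\|_{L^2(\Gamma_{\Bt{K}})}^2 \leq C h_K^{-1}\|p\|_{L^2(\Bt{K})}^2$ (from $|\Gamma_{\Bt{K}}| \leq C h_K^{d-1}$ and the polynomial inverse estimate on the ball) and then \eqref{eq_EKL2}, which yields $h_K^{-1}\|\jump{E_{K,\tau}^m v_h}\|_{L^2(\Gamma_{\Bt{K}})}^2 \leq C h_K^{-2}\sum_i \|v_h^{\pi_i}\|_{L^2(K)}^2$; summing over $K \in \MThG$ gives $\shnorm{v_h}^2 \leq C h^{-2}\sum_i\|v_h^{\pi_i}\|_{L^2(\Ohi)}^2$. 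Altogether $\DGenorm{v_h}^2 \leq C h^{-2}\sum_i \|v_h^{\pi_i}\|_{L^2(\Ohi)}^2 \leq C h^{d-2}|\bm{V}|^2$, so $\lambda_{\max}(A) \leq C h^{d-2}$, and dividing the two bounds yields $\kappa(A) \leq C h^{-2}$.

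The main obstacle, and the whole point of the estimate, is the cut-robustness of the lower bound $\lambda_{\min}(A) \geq C h^d$. Without stabilization, a function concentrated on a sliver $\Omega_i$-part of a cut element would have tiny energy but non-negligible coefficients, forcing $\lambda_{\min}$ to degenerate with a positive power of the smallest cut fraction. What rescues the argument is that \eqref{eq_L2bound} controls the full active-mesh $L^2$ norm $\sum_i\|v_h^{\pi_i}\|_{L^2(\Ohi)}$ by the energy norm; this control is precisely what the ghost-penalty form $g_{h,i}$ supplies, transferring the mass from the physical domain $\Omega_i$ to the full active mesh $\Ohi$ where the clean $h^d$ scaling holds. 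Verifying carefully that every constant in the inverse trace bound and in the mass-matrix equivalence is independent of the cut geometry is therefore the step requiring the most attention; the upper bound, by contrast, is a routine application of local inverse inequalities.
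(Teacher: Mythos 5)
Your proposal is correct and follows essentially the same route as the paper: the upper bound $a_h(v_h, v_h) \leq C \DGenorm{v_h}^2 \leq C h^{-2}\big(\|v_h^{\pi_0}\|_{L^2(\Oho)}^2 + \|v_h^{\pi_1}\|_{L^2(\Ohl)}^2\big)$ via continuity and inverse estimates, the lower bound via the coercivity \eqref{eq_coercivity} chained with \eqref{eq_L2bound}, and the nodal mass-matrix equivalence on the full-size active meshes (the paper's ``property of the $C^0$ space''). You merely make explicit the details the paper compresses — in particular the inverse trace bound on $\Gamma_{\Bt{K}}$ for the $s_h$ term and the cut-independence of the $h^d$ scaling — all of which check out.
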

\begin{proof}
  From the Cauchy-Schwarz inequality and the inverse estimate, it can
  be observed that 
  \begin{displaymath}
    a_h(v_h, v_h) \leq C \DGenorm{v_h}^2  \leq C h^{-2} (
    \| v_h^{\pi_0} \|_{L^2(\Oho)}^2 + 
    \| v_h^{\pi_1} \|_{L^2(\Ohl)}^2), \quad \forall v_h \in V_h^m.
  \end{displaymath}
  By the coercivity \eqref{eq_coercivity} and the estimate
  \eqref{eq_L2bound}, we find that 
  \begin{displaymath}
    (
    \| v_h^{\pi_0} \|_{L^2(\Oho)}^2 + 
    \| v_h^{\pi_1} \|_{L^2(\Ohl)}^2) 
    \leq C a_h(v_h, v_h) \leq C  h^{-2} (
    \| v_h^{\pi_0} \|_{L^2(\Oho)}^2 + 
    \| v_h^{\pi_1} \|_{L^2(\Ohl)}^2),
  \end{displaymath}
  for any $v_h \in V_h^m$.
  From the property of the $C^0$ space, the above estimate immediately
  brings us that $\kappa(A) \leq C h^{-2}$, which completes the proof.
\end{proof}


\section{Numerical Results}
\label{sec_numericalresults}
In this section, we provide a series of numerical tests to demonstrate
the numerical performance of the proposed method. 
In the following tests, the source function $f$ and the jump
conditions $g_N$, $g_D$ are chosen accordingly.
In Example 1 - Example 4, we solve the elliptic interface problem in
two dimensions defined in the squared domain $\Omega = (-1, 1)^2$. 
In Example 1 and Example 2, the interface $\Gamma$ is of class $C^2$
and is described by a level set function. 
In Example 3 and Example 4, the interface is taken to be the boundary
of an L-shaped domain, see Fig.~\ref{fig_2dinterface}.
In Example 5, we solve a three-dimensional interface problem in the
cubic domain $\Omega = (-1, 1)^3$ to illustrate the numerical
performance.

\begin{figure}[htp]
  \centering
  \begin{minipage}[t]{0.23\textwidth}
    \centering
    \begin{tikzpicture}[scale=1.39]
      \centering
      \input{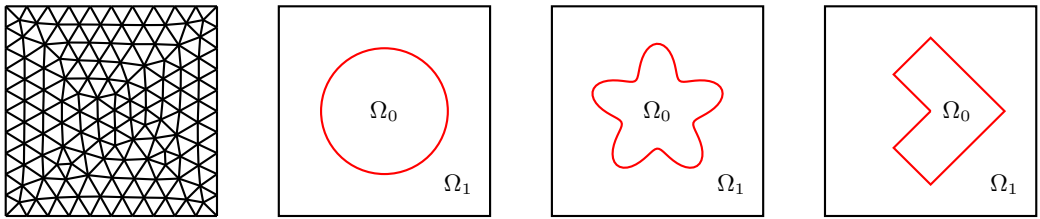}
    \end{tikzpicture}
  \end{minipage}
  \hfill
  \begin{minipage}[t]{0.23\textwidth}
    \centering
    \begin{tikzpicture}[scale=1.39]
      \centering
      \node at (0, 0) {\small $\Omega_0$};
      \node at (0.7, -0.7) {\small $\Omega_1$};
      \draw[thick, black] (-1.0, -1.0) rectangle (1.0, 1.0);
      \draw[thick, red] (0, 0)  circle [radius=0.6];
    \end{tikzpicture}
  \end{minipage}
  \hfill
  \begin{minipage}[t]{0.23\textwidth}
    \centering
    \begin{tikzpicture}[scale=1.39]
      \centering
      \node at (0, 0) {\small $\Omega_0$};
      \node at (0.7, -0.7) {\small $\Omega_1$};
      \draw[thick, black] (-1.0, -1.0) rectangle (1.0, 1.0);
      \draw[thick, domain=0:360, red, samples=120] plot (\x:{(0.5 +
      sin(\x*5)/7)*1.00});
    \end{tikzpicture}
  \end{minipage}
  \hfill
  \begin{minipage}[t]{0.23\textwidth}
    \centering
    \begin{tikzpicture}[scale=1.39]
      \centering
      \node at (0.25, 0) {\small$\Omega_0$};
      \node at (0.7, -0.7) {\small $\Omega_1$};
      \draw[thick, black] (-1.0, -1.0) rectangle (1.0, 1.0);
      \draw[thick, red] (0, 0) -- (-0.35, 0.35) -- (0, 0.7) -- (0.7,
      0.0) -- (0, -0.7) -- (-0.35, -0.35) -- (0, 0);
    \end{tikzpicture}
  \end{minipage}
  \caption{The unfitted mesh and the interface in two dimensions.}
  \label{fig_2dinterface}
\end{figure}

\begin{figure}[htp]
  \centering
  \includegraphics[width=0.29\textwidth, height=0.29\textwidth]{./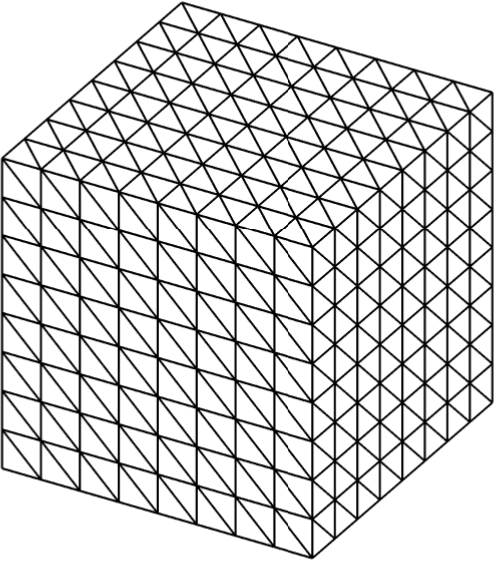}
  \hspace{100pt}
  \begin{tikzpicture}[scale=2.6]
    \draw[thick] (-0.9, 0.2) -- (0, 0) -- (0.6, 0.35);
    \draw[thick] (0, 0) -- (0, 1);
    \draw[thick] (0.6, 1.35) -- (0, 1) -- (-0.9, 1.2);
    \draw[thick] (0.6, 1.35) -- (0.6, 0.35);
    \draw[thick] (-0.9, 0.2) -- (-0.9, 1.2);
    \draw[thick] (-0.9, 1.2) -- (-0.26, 1.55) -- (0.6, 1.35);
    \draw[thick, dashed] (-0.9, 0.2) -- (-0.26, 0.55) -- (0.6, 0.35);
    \draw[thick, dashed] (-0.26, 0.55) -- (-0.26, 1.55);
    \draw[thick] (-0.2, 0.7) circle [radius=0.3];
    \draw[thick, dashed] (-0.5, 0.7) [out = -30, in = 210] to (0.1,
    0.7);
    \draw[thick, dashed] (-0.5, 0.7) [out = 20, in = 160] to (0.1,
    0.7);
    \node at (-0.16, 0.68) {$\Omega_0$};
    \node at (0.3, 0.89) {$\Omega_1$};
  \end{tikzpicture}
  \caption{The unfitted mesh and the interface in three dimensions.}
  \label{fig_3dinterface}
\end{figure}

\paragraph{\textbf{Example 1.}}
We first consider an elliptic problem in two dimensions with
a circular interface centered at $(0, 0)$ with radius $r = 0.6$.
We select the exact solution as
\begin{displaymath}
  u(x, y) = \sin(\pi x) \sin(2\pi y), \quad \forall (x, y) \in
  \Omega,
\end{displaymath}
with a discontinuous parameter $\alpha|_{\Omega_0} = 1,
\alpha|_{\Omega_1} = 5$. 
We adopt a family of triangular meshes with the mesh size $h = 1/10,
\ldots, 1/80$ for this test. 
The convergence histories under the $H^1$ norm and the $L^2$ norm 
are gathered in Tab.~\ref{tab_ex1}.
For the smooth case, it can be observed that both numerical errors
approach zero at optimal rates $O(h^m)$ and $O(h^{m+1})$,
respectively, which are consistent with the theoretical analysis
derived in Theorem \ref{th_DGerror}.

In Tab.~\ref{tab_ex1cond}, we report the condition number
of the final linear system corresponding to the numerical scheme
\eqref{eq_discreteweak}. 
It is evident that the condition number $\kappa(A_m)$ scales
as $O(h^{-2})$ for all accuracies $1 \leq m \leq 3$, which confirms
the predication in Theorem \ref{th_kappaA}.

\begin{table}
  \centering
  \renewcommand\arraystretch{1.5}
  \scalebox{1.00}{
  \begin{tabular}{p{0.5cm}|p{3.8cm}| p{1.6cm} | p{1.6cm} | p{1.6cm} |
    p{1.6cm} | p{0.9cm} }
    \hline\hline
    $m$ & $h$ & 1/10 & 1/20 & 1/40 & 1/80 & rate \\
    \hline
    \multirow{2}{*}{$1$} &$ \|\nabla (u - u_h) \|_{L^2(\Omega_0 \cup
    \Omega_1)}$ &  
    1.396e+00 & 6.593e-01 & 3.213e-01 & 1.583e-01 & 1.02 \\
    \cline{2-7}
    & $\| u - u_h \|_{L^2(\Omega_0 \cup \Omega_1)}$ & 
    7.017e-02 & 1.428e-02 & 2.981e-03 & 6.399e-04 & 2.21 \\
    \hline
    \multirow{2}{*}{$2$} &$ \| \nabla(u - u_h) \|_{L^2(\Omega_0 \cup
    \Omega_1)} $ &  
    1.106e-01 & 2.599e-02 & 6.315e-03 & 1.551e-03 & 2.03 \\
    \cline{2-7}
    & $\| u - u_h \|_{L^2(\Omega_0 \cup \Omega_1)}$ & 
    1.500e-03 & 1.753e-04 & 2.089e-05 & 2.557e-06 & 3.03 \\
    \hline
    \multirow{2}{*}{$3$} & $ \| \nabla( u - u_h) \|_{L^2(\Omega_0 \cup
    \Omega_1)} $ & 
    6.406e-03 & 7.021e-04 & 8.189e-05 & 9.871e-06 & 3.05 \\
    \cline{2-7}
    & $\| u - u_h \|_{L^2(\Omega_0 \cup \Omega_1)}$ & 
    5.918e-05 & 3.283e-06 & 1.900e-07 & 1.123e-08 & 4.08 \\
    \hline\hline
  \end{tabular}
  }
  \caption{The convergence histories for Example 1.}
  \label{tab_ex1}
\end{table}

\begin{table}
  \centering
  \renewcommand\arraystretch{1.5}
  \begin{tabular}{p{0.5cm}|p{1.6cm}|p{1.6cm}| p{1.6cm}|p{1.6cm}|p{1cm}}
    \hline\hline
    $m$ & 1/10 & 1/20 & 1/40 & 1/80 & rate \\
    \hline
    $1$ & 1.696e+3 & 6.672e+3 & 2.755e+4 & 1.093e+5 & 
    \multirow{3}{*}{$O(h^{-2})$} \\
    \cline{1-5}
    $2$ & 3.577e+4 & 1.281e+5 & 5.302e+5 & 2.139e+6 & \\
    \cline{1-5}
    $3$ & 2.203e+6 &  4.137e+6 & 1.719e+7 & 6.832e+7 & \\
    \hline\hline
  \end{tabular}
  \caption{The condition numbers for the final sparse linear system
  for Example 1.}
  \label{tab_ex1cond}
\end{table}

\paragraph{\textbf{Example 2.}}
In the second test, we solve an interface problem which involves 
an interface consisting of both concave and convex curve segments 
\cite{Zhou2006fictitious}, see Fig.~\ref{fig_2dinterface}. 
The star-shaped interface is parametrized with polar coordinates $(r,
\theta)$, that is
\begin{displaymath}
  r = \frac{1}{2} + \frac{\sin 5 \theta}{7}.
\end{displaymath}
The exact solution is chosen to be discontinuous across the interface, 
which reads
\begin{displaymath}
  u(x, y) = \begin{cases}
    \sin(2\pi x) \sin(4 \pi y), & \text{in } \Omega_0, \\
    \cos(2\pi x) \cos(4 \pi y), & \text{in } \Omega_1, \\
  \end{cases}
  \quad
  \alpha = \begin{cases}
    10, & \text{in } \Omega_0, \\
    1, & \text{in } \Omega_1. \\
  \end{cases}
\end{displaymath}
The numerical errors under both error measurements are collected in
Tab.~\ref{tab_ex2}. 
The detected convergence orders are $O(h^{m})$ and
$O(h^{m+1})$ for the errors $\| \nabla (u - u_h) \|_{L^2(\Omega_0 \cup
\Omega_1)}$ and $ \| u - u_h \|_{L^2(\Omega_0 \cup \Omega_1)} $,
respectively. 
As in the previous example, the convergence rates are both optimal,
and the convergence histories also agree with the theoretical results
for discontinuous exact solutions.

\begin{table}
  \centering
  \renewcommand\arraystretch{1.5}
  \scalebox{1.00}{
  \begin{tabular}{p{0.5cm}|p{3.8cm}| p{1.6cm} | p{1.6cm} | p{1.6cm} |
    p{1.6cm} | p{0.9cm} }
    \hline\hline
    $m$ & $h$ & 1/10 & 1/20 & 1/40 & 1/80 & rate \\
    \hline
    \multirow{2}{*}{$1$} & $ \|\nabla(u - u_h) \|_{L^2(\Omega_0 \cup
    \Omega_1)} $ &  
    2.573e+00 & 1.266e+00 & 6.256e-01 & 3.117e-01 & 1.01 \\
    \cline{2-7}
    & $\| u - u_h \|_{L^2(\Omega_0 \cup \Omega_1)}$ & 
    5.509e-02 & 1.183e-02 & 2.550e-03 & 5.866e-04 & 2.12 \\
    \hline
    \multirow{2}{*}{$2$} & $ \| \nabla(u -u_h) \|_{L^2(\Omega_0 \cup
    \Omega_1)}$  &  
    3.168e-01 & 6.871e-02 & 1.496e-02 & 3.428e-03 & 2.13 \\
    \cline{2-7}
    & $\| u - u_h \|_{L^2(\Omega_0 \cup \Omega_1)}$ & 
    3.003e-03 & 2.756e-04 & 2.701e-05 & 2.963e-06 & 3.18 \\
    \hline
    \multirow{2}{*}{$3$} & $ \| \nabla(u - u_h) \|_{L^2(\Omega_0 \cup
    \Omega_1)} $ & 
    3.316e-02 & 3.263e-03 & 3.112e-04 & 3.235e-05 & 3.26 \\
    \cline{2-7}
    & $\| u - u_h \|_{L^2(\Omega_0 \cup \Omega_1)}$ & 
    2.132e-04 & 9.856e-06 & 4.503e-07 & 2.173e-08 & 4.37 \\
    \hline\hline
  \end{tabular}
  }
  \caption{The convergence histories for Example 2.}
  \label{tab_ex2}
\end{table}

\paragraph{\textbf{Example 3.}}
In this test, we consider the case with an L-shaped polygonal
interface, which is described by the following vertices 
\begin{equation}
  (0, 0), \quad (-0.35, 0.35), \quad (0, 0.7), \quad (0.7, 0), \quad
  (0, -0.7), \quad (-0.35, -0.35). 
  \label{eq_Lshaped}
\end{equation}
Particularly, the interface here fails to be of class $C^2$.
We first choose a smooth analytical solution defined as 
\begin{displaymath}
  u(x, y) = \begin{cases}
    \cos(\pi x)\cos(\pi y), & \text{in } \Omega_0, \\
    \sin(2\pi x)\sin(2\pi y), & \text{in } \Omega_1, \\
  \end{cases} \quad \alpha = 1, \quad \text{in } \Omega,
\end{displaymath}
to test the numerical scheme.
The numerical errors under $L^2$ and $H^1$ norms are shown in
Tab.~\ref{tab_ex3}. 
It is shown that both the $L^2$- and $H^1$-norm errors achieve optimal
convergence rates to zero, which validates the theoretical estimation 
for the case with the polygonal interface.

\begin{table}
  \centering
  \renewcommand\arraystretch{1.5}
  \scalebox{1.00}{
  \begin{tabular}{p{0.5cm}|p{3.8cm}| p{1.6cm} | p{1.6cm} | p{1.6cm} |
    p{1.6cm} | p{0.9cm} }
    \hline\hline
    $m$ & $h$ & 1/10 & 1/20 & 1/40 & 1/80 & rate \\
    \hline
    \multirow{2}{*}{$1$} & $ \|\nabla(u - u_h) \|_{L^2(\Omega_0 \cup
    \Omega_1)} $ &  
    1.532e+00 & 7.688e-01 & 3.838e-01 & 1.918e-01 & 1.00 \\
    \cline{2-7}
    & $\| u - u_h \|_{L^2(\Omega_0 \cup \Omega_1)}$ & 
    3.482e-02 & 8.836e-03 & 2.193e-03 & 5.468e-04 & 2.01 \\
    \hline
    \multirow{2}{*}{$2$} & $ \| \nabla(u -u_h) \|_{L^2(\Omega_0 \cup
    \Omega_1)}$  &  
    1.351e-01 & 3.393e-02 & 8.418e-03 & 2.096e-03 & 2.01 \\
    \cline{2-7}
    & $\| u - u_h \|_{L^2(\Omega_0 \cup \Omega_1)}$ & 
    1.633e-03 & 2.055e-04 & 2.542e-05 & 3.152e-06 & 3.02 \\
    \hline
    \multirow{2}{*}{$3$} & $ \| \nabla(u - u_h) \|_{L^2(\Omega_0 \cup
    \Omega_1)} $ & 
    7.831e-03 & 9.868e-04 & 1.192e-04 & 1.476e-05 & 3.01 \\
    \cline{2-7}
    & $\| u - u_h \|_{L^2(\Omega_0 \cup \Omega_1)}$ & 
    6.516e-05 & 4.086e-06 & 2.431e-07 & 1.493e-08 & 4.03 \\
    \hline\hline
  \end{tabular}
  }
  \caption{The convergence histories for Example 3.}
  \label{tab_ex3}
\end{table}

\paragraph{\textbf{Example 4.}}
In this test, we investigate the performance of the proposed method 
dealing with the problem that involves a singular solution. 
In the L-shaped domain \eqref{eq_Lshaped}, 
we select
\begin{displaymath}
  u(x, y) = \begin{cases}
    r^{\beta} \sin(\beta \theta), & \text{in } \Omega_0, \\ 
    0, & \text{in } \Omega_1, \\
  \end{cases} \quad \alpha = 1,
\end{displaymath}
to be the exact solution in polar coordinates $(r, \theta)$.
Here, $u$ has the regularity $H^{1 + \beta - \varepsilon}(\Omega_0
\cup \Omega_1)$ for any $\varepsilon > 0$.
We take $\beta = 2/3$, and
the numerical results are listed in Tab.~\ref{tab_ex41}.
It can be seen that the convergence rates under the $L^2$ norm and the
$H^1$ norm are nearly $O(h^{0.66})$ for all accuracies $m= 1,2,3$,
which is consistent with the regularity of the solution. 
Different from the case where the exact solution is smooth, 
the rate under $L^2$ norm is observed to be only suboptimal, and the
reason may be traced back to the singularity of the exact solution and
the polygonal interface. 
Although the convergence rates for all accuracies are the same, the
high-order scheme delivers smaller numerical errors. 

Furthermore, we solve the problem with $\beta = 1/4$, for which the
solution exhibits stronger singularity near the origin. 
The results are gathered in Tab.~\ref{tab_ex42}.
The numerical errors under both error measurements are also found to
decrease at the speed $O(h^{0.25})$, which are in agreement with the
error analysis. 
For such cases, applying adaptive mesh refinement strategies may
potentially improve the convergence performance, and this will be
explored in future studies.

\begin{table}
  \centering
  \renewcommand\arraystretch{1.5}
  \scalebox{1.00}{
  \begin{tabular}{p{0.5cm}|p{3.8cm}| p{1.6cm} | p{1.6cm} | p{1.6cm} |
    p{1.6cm} | p{0.9cm} }
    \hline\hline
    $m$ & $h$ & 1/10 & 1/20 & 1/40 & 1/80 & rate \\
    \hline
    \multirow{2}{*}{$1$} & $ \|\nabla(u - u_h) \|_{L^2(\Omega_0 \cup
    \Omega_1)} $ &  
    8.156e-02 & 5.203e-02 & 3.307e-02 & 2.096e-02 & 0.66 \\
    \cline{2-7}
    & $\| u - u_h \|_{L^2(\Omega_0 \cup \Omega_1)}$ & 
     3.539e-03 & 1.925e-03 & 1.150e-03 & 7.119e-04 & 0.69 \\
    \hline
    \multirow{2}{*}{$2$} & $ \| \nabla(u -u_h) \|_{L^2(\Omega_0 \cup
    \Omega_1)}$  &  
    3.278e-02 & 2.065e-02 & 1.301e-02 & 8.193e-03 & 0.67 \\
    \cline{2-7}
    & $\| u - u_h \|_{L^2(\Omega_0 \cup \Omega_1)}$ & 
    2.840e-03 & 1.782e-03 & 1.121e-03 & 7.063e-04 & 0.68 \\
    \hline
    \multirow{2}{*}{$3$} & $ \| \nabla(u - u_h) \|_{L^2(\Omega_0 \cup
    \Omega_1)} $ & 
    1.907e-02 & 1.214e-02 & 7.730e-03 & 4.920e-03 & 0.65 \\
    \cline{2-7}
    & $\| u - u_h \|_{L^2(\Omega_0 \cup \Omega_1)}$ & 
    2.827e-03 & 1.780e-03 & 1.121e-03 & 7.063e-04 & 0.66 \\
    \hline\hline
  \end{tabular}
  }
  \caption{The convergence histories for Example 4 with the solution
  $u \in H^{5/3 - \varepsilon}(\Omega_0 \cup \Omega_1)$.}
  \label{tab_ex41}
\end{table}

\begin{table}
  \centering
  \renewcommand\arraystretch{1.5}
  \scalebox{1.00}{
  \begin{tabular}{p{0.5cm}|p{3.8cm}| p{1.6cm} | p{1.6cm} | p{1.6cm} |
    p{1.6cm} | p{0.9cm} }
    \hline\hline
    $m$ & $h$ & 1/10 & 1/20 & 1/40 & 1/80 & rate \\
    \hline
    \multirow{2}{*}{$1$} & $ \|\nabla(u - u_h) \|_{L^2(\Omega_0 \cup
    \Omega_1)} $ &  
    2.949e-01 & 2.494e-01 & 2.108e-01 & 1.782e-01 & 0.24 \\
    \cline{2-7}
    & $\| u - u_h \|_{L^2(\Omega_0 \cup \Omega_1)}$ & 
    3.081e-02 & 2.563e-02 & 2.149e-02 & 1.806e-02 & 0.25 \\
    \hline
    \multirow{2}{*}{$2$} & $ \| \nabla(u -u_h) \|_{L^2(\Omega_0 \cup
    \Omega_1)}$  &  
    2.105e-01 & 1.789e-01 & 1.519e-01 & 1.290e-01 & 0.24 \\
    \cline{2-7}
    & $\| u - u_h \|_{L^2(\Omega_0 \cup \Omega_1)}$ & 
    3.043e-02 & 2.551e-02 & 2.147e-02 & 1.805e-02 & 0.25 \\
    \hline
    \multirow{2}{*}{$3$} & $ \| \nabla(u - u_h) \|_{L^2(\Omega_0 \cup
    \Omega_1)} $ & 
    1.695e-01 & 1.448e-01 & 1.236e-01 & 1.055e-01 & 0.23 \\
    \cline{2-7}
    & $\| u - u_h \|_{L^2(\Omega_0 \cup \Omega_1)}$ & 
    3.008e-02 & 2.503e-02 & 2.103e-02 & 1.771e-02 & 0.25 \\
    \hline\hline
  \end{tabular}
  }
  \caption{The convergence histories for Example 4 with the solution
  $u \in H^{1.25 - \varepsilon}(\Omega_0 \cup \Omega_1)$.}
  \label{tab_ex42}
\end{table}

\paragraph{\textbf{Example 5.}}
In this example, we solve a three-dimensional problem in the cubic
domain $\Omega = (-1, 1)^3$, see Fig.~\ref{fig_3dinterface}. 
The interface $\Gamma$ is taken as a sphere centered at the origin
with radius $r = 0.6$.
Let the exact solution $u$ be 
\begin{displaymath}
  u(x, y, z) = \begin{cases}
    \sin(\pi x) \sin(\pi y) \sin(\pi z), & \text{in } \Omega_0, \\ 
    \cos(\pi x) \cos(\pi y) \cos(\pi z), & \text{in } \Omega_1, \\ 
  \end{cases} \quad \alpha = 1, \quad \text{in } \Omega.
\end{displaymath}
We adopt a series of tetrahedral meshes with the mesh size $h = 1/8,
\ldots, 1/64$ to solve this problem. 
The convergence results are reported in Tab.~\ref{tab_ex5}. 
It is shown that the numerical solutions have optimal convergence
rates under both the $L^2$ and $H^1$ norms, thus illustrating the
accuracy of the method in three dimensions.

\begin{table}
  \centering
  \renewcommand\arraystretch{1.5}
  \scalebox{1.00}{
  \begin{tabular}{p{0.5cm}|p{3.8cm}| p{1.6cm} | p{1.6cm} | p{1.6cm} |
    p{1.6cm} | p{0.9cm} }
    \hline\hline
    $m$ & $h$ & 1/8 & 1/16 & 1/32 & 1/64 & rate \\
    \hline
    \multirow{2}{*}{$1$} & $ \|\nabla(u - u_h) \|_{L^2(\Omega_0 \cup
    \Omega_1)} $ &  
    1.395e+00 & 7.156e-01 & 3.531e-01 & 1.756e-01 & 1.01 \\
    \cline{2-7}
    & $\| u - u_h \|_{L^2(\Omega_0 \cup \Omega_1)}$ & 
    7.852e-02 & 2.279e-02 & 5.382e-03 & 1.243e-03 & 2.11 \\ 
    \hline
    \multirow{2}{*}{$2$} & $ \| \nabla(u -u_h) \|_{L^2(\Omega_0 \cup
    \Omega_1)}$  &  
    2.905e-01 & 6.942e-02 & 1.582e-02 & 3.753e-03 & 2.08 \\
    \cline{2-7}
    & $\| u - u_h \|_{L^2(\Omega_0 \cup \Omega_1)}$ & 
    8.336e-03 & 7.963e-04 & 7.060e-05 & 7.612e-06 & 3.21 \\
    \hline\hline
  \end{tabular}
  }
  \caption{The convergence histories for Example 5.}
  \label{tab_ex5}
\end{table}


\section{Conclusions}
\label{sec_conclusion}
In this paper, we develop an unfitted finite element method for the
elliptic interface problem. 
We consider the case where the interface is $C^2$-smooth or polygonal,
and the exact solution is assumed to be piecewise $H^{1+s}$ with any
$s > 0$.
The stability near the interface is ensured by the local polygonal
extension technique, which is further employed in the construction of
the jump penalty term. 
From this technique, 
we establish an inverse-type estimate on the interface, which enables
us to derive the error estimates of the low regularity case.
Finally, a series of numerical experiments in two and three
dimensions are conducted to validate the theoretical findings.



\begin{appendix}
  \label{sec_app}
  \section{}
  Let $T$ denote a $d$-dimensional simplex with vertices $A_0, \ldots,
  A_{d}$, and let $L$ be the length of the longest side of $T$. 
  \begin{lemma}
    For any interior point $P$ in $T$, there holds
    \begin{equation}
      \sum_{j = 0}^d |PA_j| < dL. 
      \label{eq_app_dis}
    \end{equation}
    \label{le_app_dis}
  \end{lemma}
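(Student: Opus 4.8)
The plan is to regard the quantity as a function of the point, $f(P) := \sum_{j=0}^{d} |PA_j|$ on $\mathbb{R}^d$, and to exploit convexity. Each map $P \mapsto |PA_j|$ is the Euclidean distance to a fixed point, hence convex, so $f$ is convex as a finite sum of convex functions. Because $\overline{T}$ is compact and convex with extreme points exactly $A_0, \dots, A_d$, the maximum of the convex function $f$ over $\overline{T}$ is attained at some vertex $A_k$. There the diagonal term vanishes, and since $A_k$ is joined to the remaining $d$ vertices by edges of length at most $L$,
\begin{displaymath}
  f(A_k) = \sum_{j \neq k} |A_k A_j| \leq d L .
\end{displaymath}
Thus $f(P) \leq dL$ on all of $\overline{T}$; it remains only to make this strict at interior points.

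To upgrade the bound I would show that $f$ is in fact strictly convex, so that its maximum over $\overline{T}$ cannot be attained in the interior. Along a segment $[P,P']$ the term $|xA_j|$ is affine precisely when that segment lies on a ray issuing from $A_j$; for all $d+1$ terms to be simultaneously affine would force $A_0,\dots,A_d$ to be collinear, contradicting their affine independence (here $d \geq 2$). Hence along every segment at least one summand is strictly convex, and therefore so is $f$. If an interior point $P$ satisfied $f(P) = M := \max_{\overline T} f$, then choosing $P_\pm = P \pm \varepsilon v \in \overline{T}$ for a small $\varepsilon>0$ would give $f(P) < \tfrac12\big(f(P_+) + f(P_-)\big) \leq M$, a contradiction. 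Therefore every interior $P$ obeys $f(P) < M \leq dL$, which is \eqref{eq_app_dis}.

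The crux, and the only genuinely delicate point, is exactly this passage from the non-strict estimate $f \leq dL$ to the strict inequality $f < dL$. The vertex estimate alone cannot yield strictness, since $f(A_k) = dL$ can occur (on a regular simplex every vertex realizes equality), so the gain must come from the geometry of interior points; this is precisely what the strict-convexity argument supplies, and it is where the nondegeneracy of $T$ is used. I would close by noting that \eqref{eq_app_dis} immediately gives the barycenter estimate \eqref{eq_app_bc} invoked earlier: writing $\bm{x}_K = \tfrac{1}{d+1}\sum_{j} A_j$, for any interior point $\bm{y}$ of the simplex the triangle inequality yields $|\bm{x}_K - \bm{y}| \leq \tfrac{1}{d+1}\sum_{j} |A_j - \bm{y}| < \tfrac{d}{d+1} L$, i.e. the bounds $\tfrac{2}{3}h_K$ and $\tfrac{3}{4}h_K$ used in the cases $d=2$ and $d=3$.
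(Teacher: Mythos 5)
Your proof is correct, but it follows a genuinely different route from the paper. The paper argues by direct elementary geometry: in two dimensions it takes the shortest side $A_1A_2$, passes a segment $DE$ through $P$ parallel to it, and combines the triangle inequalities $|A_1P| < |A_1D| + |DP|$, $|A_2P| < |PE| + |A_2E|$ with the observations $|DE| \leq \min(|A_0D|, |A_0E|)$ and $|A_0P| < \max(|A_0D|, |A_0E|)$ to get the sharper bound $|A_0P|+|A_1P|+|A_2P| < |A_0A_1|+|A_0A_2| \leq 2L$; the three-dimensional case is then handled by cutting with a plane through $P$ parallel to a face and invoking the two-dimensional result on that cross-section. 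You instead observe that $f(P) = \sum_j |PA_j|$ is convex, so its maximum over $\overline{T}$ is attained at a vertex, where it is at most $dL$, and you obtain strictness in the interior from strict convexity of $f$ along every segment (all $d+1$ distance terms can be simultaneously affine only if the vertices are collinear, impossible for $d \geq 2$). Your treatment of the strictness step is sound: if $f(P) = \tfrac12(f(P_+)+f(P_-))$ then $f$, hence each convex summand, is affine on $[P_-,P_+]$, which forces the line through that segment to contain every $A_j$. What each approach buys: yours is uniform in the dimension (any $d \geq 2$ in one stroke, versus the paper's separate $d=2$ and $d=3$ arguments), it cleanly isolates where nondegeneracy of the simplex enters, and it correctly flags that the lemma fails for $d = 1$, where equality holds; the paper's construction is entirely elementary (no convex analysis), and in two dimensions it yields the slightly stronger conclusion that the sum is less than the sum of the two sides adjacent to $A_0$ when $A_1A_2$ is shortest. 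Your closing derivation of the barycenter estimate \eqref{eq_app_bc} matches the paper's.
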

  \begin{proof}
    We first prove \eqref{eq_app_dis} in two dimensions. 
    Assume that $A_{1}A_{2}$ is the shortest side of the triangle $A_0
    A_1A_2$, and we let $D$ and $E$ be points on sides $A_0A_1$ and
    $A_0A_2$, respectively, such that the segment $DE$ passes through
    $P$ and is parallel to the edge $A_1A_2$, see
    Fig.~\ref{fig_app_proof}. 
    Then, $DE$ is the shortest side of the
    triangle $A_0DE$, i.e. $|DE| \leq \min(|A_0D|, |A_0E|).$
    Notice that $|A_0P| < \max(|A_0 D|, |A_0 E|)$, which gives
    $|A_0 P| + |DE| \leq |A_0D| + |A_0 E|$.
    By the triangle inequalities $|A_1P| < |A_1D| + |DP|$ and $|A_2P|
    < |PE| + |A_2E|$, we conclude that 
    \begin{equation}
      |A_0P| + |A_1P| + |A_2P| < |A_0A_1| + |A_0A_2| \leq 2L,
      \label{eq_app_dis_2d}
    \end{equation}
    which brings \eqref{eq_app_dis} in two dimensions. 

    In three dimensions, we let $D$, $F$, $E$ be the points on edges
    $A_0A_1$, $A_0A_2$, $A_0A_3$, respectively, such that the triangle
    $DFE$ contains $P$ and is parallel to the triangle $A_1 A_2 A_3$,
    see Fig.~\ref{fig_app_proof}.
    Note that there exists a constant $k \in (0, 1)$ such that 
    \begin{displaymath}
      \max(|DF|, |DE|, |FE|, |A_0D|, |A_0E|, |A_0F|) \leq k
      L, \quad \max(|A_1D|, |A_2F|, |A_3E|) \leq (1-k)L.
    \end{displaymath}
    Applying \eqref{eq_app_dis_2d} to the triangle $DFE$, 
    there holds $|DP| + |EP| + |FP| < 2kL$, and $|A_0P| < kL$. 
    We deduce that 
    \begin{align*}
      |A_0P| + |A_1P| + |A_2P| + |A_3P| &< |DP| + |A_1D| + |PE| +
      |A_3E| + |PF| + |A_2F| + |A_0P| \\
      & < 2kL + 3(1-k)L + kL = 3L,
    \end{align*}
    which leads to \eqref{eq_app_dis} and completes the proof.
    \begin{figure}[htp]
      \centering
      \begin{minipage}[t]{0.42\textwidth}
        \centering
        \begin{tikzpicture}[scale=3.6]
          \coordinate (A) at (0.35, 0.75);
          \coordinate (B) at (0.0, 0.00);
          \coordinate (C) at (0.8, 0.00);
          \coordinate (P) at (0.3, 0.25);
          \coordinate (D) at (0.1166, 0.25);
          \coordinate (E) at (0.65, 0.25);
          \draw[thick] (A) -- (B) -- (C) -- (A);
          \node[above] at (A) {$A_0$};
          \node[below left] at (B) {$A_1$};
          \node[below left] at (D) {$D$};
          \node[below right] at (C) {$A_2$};
          \node[right] at (E) {$E$};
          \node[below] at (P) {$P$};
          \draw[dashed, thick] (D) -- (E);
          \draw[dashed, thick] (B) -- (P);
          \draw[dashed, thick] (A) -- (P) -- (C);
          \draw[thick, fill=black] (P) circle [radius=0.012];
          \draw[thick, fill=black] (D) circle [radius=0.012];
          \draw[thick, fill=black] (E) circle [radius=0.012];
        \end{tikzpicture}
      \end{minipage}
      \begin{minipage}[t]{0.42\textwidth}
        \centering
        \begin{tikzpicture}[scale=2]
          \coordinate (A0) at (0., 1);
          \coordinate (A1) at (-0.2, -0.35);
          \coordinate (A2) at (-0.6, 0.00);
          \coordinate (A3) at (0.8, 0.00);
          \coordinate (D) at (-0.133333, 0.1);
          \coordinate (F) at (-0.4, 0.333333);
          \coordinate (E) at (0.5333333, 0.333333);
          \coordinate (P) at (-0., 0.2555555);
          \draw[thick] (A0) -- (A1) -- (A2) -- (A0);
          \draw[thick] (A0) -- (A3) -- (A1);
          \draw[thick, dashed] (A2) -- (A3);
          \draw[thick] (F) -- (D) -- (E);
          \draw[thick, dashed] (F) -- (E);
          \node[above] at (A0) {$A_0$};
          \node[below] at (A1) {$A_1$};
          \node[left] at (A2) {$A_2$};
          \node[right] at (A3) {$A_3$};
          \node[below left] at (D) {$D$};
          \node[left] at (F) {$F$};
          \node[right] at (E) {$E$};
          \node[above right] at (P) {$P$};
          \draw[dashed, thick] (P) -- (E);
          \draw[dashed, thick] (P) -- (F);
          \draw[dashed, thick] (A0) -- (P) -- (D);
          \draw[dashed, thick] (P) -- (A3);
          \draw[dashed, thick] (P) -- (A1);
          \draw[dashed, thick] (P) -- (A2);
          \draw[thick, fill=black] (P) circle [radius=0.025];
          \draw[thick, fill=black] (F) circle [radius=0.025];
          \draw[thick, fill=black] (D) circle [radius=0.025];
          \draw[thick, fill=black] (E) circle [radius=0.025];
        \end{tikzpicture}
      \end{minipage}
      \caption{The simplex with vertices $A_0,\ldots,A_d$ in two and
      three dimensions.}
      \label{fig_app_proof}
    \end{figure}
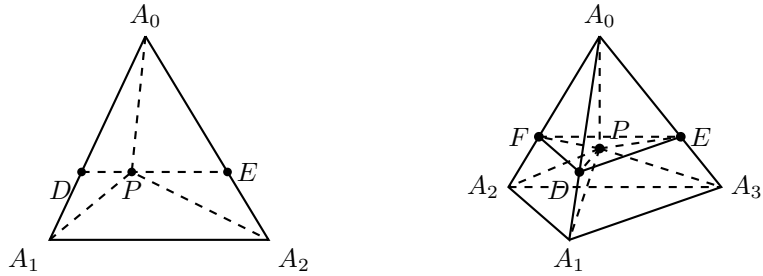
  \end{proof}
  Let $Q$ denote the barycenter of $T$. 
  From \eqref{le_app_dis}, it is straightforward to see that
  \begin{equation}
    |PQ| \leq \frac{1}{d + 1} \sum_{j = 0}^d |PA_j| <
    \frac{dL}{d + 1}.
    \label{eq_app_bc}
  \end{equation}
  for any interior point $P$ in $T$. 
\end{appendix}

\bibliographystyle{amsplain}
\bibliography{../ref}

\providecommand{\bysame}{\leavevmode\hbox to3em{\hrulefill}\thinspace}
\providecommand{\MR}{\relax\ifhmode\unskip\space\fi MR }
\providecommand{\MRhref}[2]{%
  \href{http://www.ams.org/mathscinet-getitem?mr=#1}{#2}
}
\providecommand{\href}[2]{#2}
\begin{thebibliography}{10}

\bibitem{Badia2018aggregated}
S.~Badia, F.~Verdugo, and A.~Mart\'{\i}n, \emph{The aggregated unfitted finite
  element method for elliptic problems}, Comput. Methods Appl. Mech. Engrg.
  \textbf{336} (2018), 533--553.

\bibitem{Bordas2017geometrically}
S.~P.~A. Bordas, E.~Burman, M.~G. Larson, and M.~A. Olshanskii (eds.),
  \emph{Geometrically unfitted finite element methods and applications},
  Lecture Notes in Computational Science and Engineering, vol. 121, Springer,
  Cham, 2017, Held January 6--8, 2016.

\bibitem{Burman2010ghost}
E.~Burman, \emph{Ghost penalty}, C. R. Math. Acad. Sci. Paris \textbf{348}
  (2010), no.~21-22, 1217--1220.

\bibitem{Burman2021unfitted}
E.~Burman, M.~Cicuttin, G.~Delay, and A.~Ern, \emph{An unfitted hybrid
  high-order method with cell agglomeration for elliptic interface problems},
  SIAM J. Sci. Comput. \textbf{43} (2021), no.~2, A859--A882.

\bibitem{Burman2015cutfem}
E.~Burman, S.~Claus, P.~Hansbo, M.~G. Larson, and A.~Massing, \emph{Cut{FEM}:
  discretizing geometry and partial differential equations}, Internat. J.
  Numer. Methods Engrg. \textbf{104} (2015), no.~7, 472--501.

\bibitem{Burman2012ficticious}
E.~Burman and P.~Hansbo, \emph{Fictitious domain finite element methods using
  cut elements: {II}. {A} stabilized {N}itsche method}, Appl. Numer. Math.
  \textbf{62} (2012), no.~4, 328--341.

\bibitem{Burman2014fictitious}
\bysame, \emph{Fictitious domain methods using cut elements: {III}. {A}
  stabilized {N}itsche method for {S}tokes' problem}, ESAIM Math. Model. Numer.
  Anal. \textbf{48} (2014), no.~3, 859--874.

\bibitem{Burman2022cutfem}
E.~Burman, P.~Hansbo, and M.~G. Larson, \emph{Cut{FEM} based on extended finite
  element spaces}, Numer. Math. \textbf{152} (2022), no.~2, 331--369.

\bibitem{Burman2024low}
\bysame, \emph{Low regularity estimates for {C}ut{FEM} approximations of an
  elliptic problem with mixed boundary conditions}, Math. Comp. \textbf{93}
  (2024), no.~345, 35--54.

\bibitem{Burman2025cut}
E.~Burman, P.~Hansbo, M.~G. Larson, and S.~Zahedi, \emph{Cut finite element
  methods}, Acta Numer. \textbf{34} (2025), 1--121. \MR{4926311}

\bibitem{Chen2021an}
Z.~Chen, K.~Li, and X.~Xiang, \emph{An adaptive high-order unfitted finite
  element method for elliptic interface problems}, Numer. Math. \textbf{149}
  (2021), no.~3, 507--548.

\bibitem{Chen1998interface}
Z.~Chen and J.~Zou, \emph{Finite element methods and their convergence for
  elliptic and parabolic interface problems}, Numer. Math. \textbf{79} (1998),
  no.~2, 175--202.

\bibitem{Costabel1999singularities}
M.~Costabel, M.~Dauge, and S.~Nicaise, \emph{Singularities of {M}axwell
  interface problems}, M2AN Math. Model. Numer. Anal. \textbf{33} (1999),
  no.~3, 627--649.

\bibitem{Girault1986finite}
V.~Girault and P.-A. Raviart, \emph{Finite element methods for
  {N}avier-{S}tokes equations}, Springer Series in Computational Mathematics,
  vol.~5, Springer-Verlag, Berlin, 1986, Theory and algorithms.

\bibitem{Gudi2010new}
T.~Gudi, \emph{A new error analysis for discontinuous finite element methods
  for linear elliptic problems}, Math. Comp. \textbf{79} (2010), no.~272,
  2169--2189.

\bibitem{Guo2019higher}
R.~Guo and T.~Lin, \emph{A higher degree immersed finite element method based
  on a {C}auchy extension for elliptic interface problems}, SIAM J. Numer.
  Anal. \textbf{57} (2019), no.~4, 1545--1573.

\bibitem{Massing2019stabilized}
C.~G\"{u}rkan and A.~Massing, \emph{A stabilized cut discontinuous {G}alerkin
  framework for elliptic boundary value and interface problems}, Comput.
  Methods Appl. Mech. Engrg. \textbf{348} (2019), 466--499.

\bibitem{Gurkan2019stabilized}
\bysame, \emph{A stabilized cut discontinuous {G}alerkin framework for elliptic
  boundary value and interface problems}, Comput. Methods Appl. Mech. Engrg.
  \textbf{348} (2019), 466--499.

\bibitem{Guzman2018infsup}
J.~Guzm\'{a}n and M.~Olshanskii, \emph{Inf-sup stability of geometrically
  unfitted {S}tokes finite elements}, Math. Comp. \textbf{87} (2018), no.~313,
  2091--2112.

\bibitem{Hansbo2002unfittedFEM}
A.~Hansbo and P.~Hansbo, \emph{An unfitted finite element method, based on
  {N}itsche's method, for elliptic interface problems}, Comput. Methods Appl.
  Mech. Engrg. \textbf{191} (2002), no.~47-48, 5537--5552.

\bibitem{Hansbo2014cut}
P.~Hansbo, M.~G. Larson, and S.~Zahedi, \emph{A cut finite element method for a
  {S}tokes interface problem}, Appl. Numer. Math. \textbf{85} (2014), 90--114.

\bibitem{Huang2017unfitted}
P.~Huang, H.~Wu, and Y.~Xiao, \emph{An unfitted interface penalty finite
  element method for elliptic interface problems}, Comput. Methods Appl. Mech.
  Engrg. \textbf{323} (2017), 439--460.

\bibitem{Johansson2013high}
A.~Johansson and M.~G. Larson, \emph{A high order discontinuous {G}alerkin
  {N}itsche method for elliptic problems with fictitious boundary}, Numer.
  Math. \textbf{123} (2013), no.~4, 607--628.

\bibitem{Kellogg1972higher}
R.~B. Kellogg, \emph{Higher order singularities for interface problems}, The
  mathematical foundations of the finite element method with applications to
  partial differential equations ({P}roc. {S}ympos., {U}niv. {M}aryland,
  {B}altimore, {M}d., 1972), 1972, pp.~589--602. \MR{0433926}

\bibitem{Kellogg1975poisson}
\bysame, \emph{On the {P}oisson equation with intersecting interfaces},
  Applicable Anal. \textbf{4} (1974/75), 101--129.

\bibitem{Li2023curl}
R.~Li, Q.~Liu, and F.~Yang, \emph{A reconstructed discontinuous approximation
  on unfitted meshes to {$H({\rm curl})$} and {$H({\rm div})$} interface
  problems}, Comput. Methods Appl. Mech. Engrg. \textbf{403} (2023), no.~part
  A, Paper No. 115723, 27.

\bibitem{Li2020interface}
R.~Li and F.~Yang, \emph{A discontinuous {G}alerkin method by patch
  reconstruction for elliptic interface problem on unfitted mesh}, SIAM J. Sci.
  Comput. \textbf{42} (2020), no.~2, A1428--A1457.

\bibitem{Li1998immersed}
Z.~Li, \emph{The immersed interface method using a finite element formulation},
  Appl. Numer. Math. \textbf{27} (1998), no.~3, 253--267.

\bibitem{Li2006immersed}
Z.~Li and K.~Ito, \emph{The immersed interface method}, Frontiers in Applied
  Mathematics, vol.~33, Society for Industrial and Applied Mathematics (SIAM),
  Philadelphia, PA, 2006, Numerical solutions of PDEs involving interfaces and
  irregular domains.

\bibitem{Lin2019nonconforming}
T.~Lin, D.~Sheen, and X.~Zhang, \emph{A nonconforming immersed finite element
  method for elliptic interface problems}, J. Sci. Comput. \textbf{79} (2019),
  no.~1, 442--463.

\bibitem{Liu2020interface}
H.~Liu, L.~Zhang, X.~Zhang, and W.~Zheng, \emph{Interface-penalty finite
  element methods for interface problems in {$H^1$}, {${\bold {H} {\bold (\rm
  curl)}}$}, and {$\bold {H}(\rm div)$}}, Comput. Methods Appl. Mech. Engrg.
  \textbf{367} (2020), 113137, 16.

\bibitem{Neiva2021robust}
E.~Neiva and S.~Badia, \emph{Robust and scalable {$h$}-adaptive aggregated
  unfitted finite elements for interface elliptic problems}, Comput. Methods
  Appl. Mech. Engrg. \textbf{380} (2021), Paper No. 113769, 26.

\bibitem{Scott1990finite}
L.~R. Scott and S.~Zhang, \emph{Finite element interpolation of nonsmooth
  functions satisfying boundary conditions}, Math. Comp. \textbf{54} (1990),
  no.~190, 483--493.

\bibitem{Wu2012unfitted}
H.~Wu and Y.~Xiao, \emph{An unfitted $hp$-interface penalty finite element
  method for elliptic interface problems}, J. Comput. Math. \textbf{37} (2019),
  no.~3, 316--339.

\bibitem{Yang2024least}
F.~Yang, \emph{The least squares finite element method for elasticity interface
  problem on unfitted mesh}, ESAIM Math. Model. Numer. Anal. \textbf{58}
  (2024), no.~2, 695--721.

\bibitem{Yang2022an}
F.~Yang and X.~Xie, \emph{An unfitted finite element method by direct extension
  for elliptic problems on domains with curved boundaries and interfaces}, J.
  Sci. Comput. \textbf{93} (2022), no.~3, Paper No. 75, 26.

\bibitem{Zhou2006fictitious}
Y.~C. Zhou and G.~W. Wei, \emph{On the fictitious-domain and interpolation
  formulations of the matched interface and boundary ({MIB}) method}, J.
  Comput. Phys. \textbf{219} (2006), no.~1, 228--246.

\end{thebibliography}

\end{document}